\def\MC{\mathcal{M}}
\def\C{\mathbf{C}}
\def\E{\mathbf{E}}
\def\L{\mathbf{L}}
\def\R{\mathbf{R}}
\def\1{\mathbf{1}}
\def\tr{\rm{tr}}
\def\al{\alpha}
\def\pa{\partial}
\def\ep{\epsilon}
\def\de{\delta}
\def\ga{\gamma}
\def\ka{\varkappa}
\newtheorem{theorem}{Theorem}[section]
\newtheorem{remark}{Remark}
\newcommand{\Ga}{\Gamma}
\newcommand{\De}{\Delta}
\begin{document}
\title{Quantum mean-field games}

\author{Vassili N. Kolokoltsov\thanks{Department of Statistics, University of Warwick,
 Coventry CV4 7AL UK, associate member of HSE, Moscow,
  Email: v.kolokoltsov@warwick.ac.uk}}
\maketitle
	
\begin{abstract}
Quantum games represent the really 21st century branch of game theory, tightly linked to the modern
development of quantum computing and quantum technologies. The main accent in these developments so far was made on
stationary or repeated games. In the previous paper of the author the truly dynamic quantum game theory was initiated
 with strategies chosen by players in real time. Since direct continuous observations are known to destroy quantum evolutions
 (so-called quantum Zeno paradox) the necessary new ingredient for quantum dynamic games represented the theory of
 non-direct observations and the corresponding quantum filtering. Another remarkable 21st century branch of game theory
  represent the so-called mean-field games (MFG), with impressive and ever growing development.
 In this paper we are merging these two exciting new branches of game theory.
 Building a quantum analog of MFGs requires the full reconstruction of its foundations and methodology, because in $N$-particle quantum
 evolution particles are not separated in individual dynamics and the key concept of the classical MFG theory, the empirical measure defined as the sum of Dirac masses of the positions of the players, is not applicable in quantum setting.
 As a preliminary result we derive the new nonlinear stochastic Schr\"odinger equation, as the limit of continuously observed
 and controlled system of large number of interacting quantum  particles, the result that may have
 an independent value. We then show that to a control quantum system of interacting particles there corresponds a special
 system of classical interacting particles with the identical limiting MFG system, defined on an appropriate Riemanian
  manifold. Solutions of this system are shown to specify approximate Nash equilibria for $N$-agent quantum games.
\end{abstract}

{\bf Key words:} quantum dynamic games, mean field games, quantum control,
quantum filtering, Belavkin equation, nonlinear stochastic Schr\"odinger equation,
quantum interacting particles, controlled diffusion on Riemannian manifolds,
Hamilton-Jacobi-Bellman equation on manifolds, mild solutions.

{\bf MSC2010:} 91A15, 81Q93, 91A06, 93E11, 93E20.

\section{Introduction}

Quantum games represent the really 21st century branch of game theory, tightly linked to the modern
development of quantum computing and quantum technologies. Initiated by Meyer \cite{MeyerD99},
Eisert,  Wilkens and Lewenstein \cite{EWL99}, and Marinatto and Weber \cite{MW00}, the theory now
boasts of many beautiful results obtained by various authors in numerous publications, see e.g.
surveys \cite{KhanReview18}, \cite{GuoZhang08}, and a mathematically oriented survey \cite{KolQSurv}.
 However, the main accent in these developments was made on stationary or repeated games.
 In \cite{KolDynQuGames} the author developed the truly dynamic quantum game theory with strategies
 chosen by players in real time. Since direct continuous observations are known to destroy quantum
 evolutions (so-called quantum Zeno paradox) the necessary new ingredient for quantum dynamic games
 represented the theory of non-direct observations and the corresponding quantum filtering. This
 theory was essentially developed by Belavkin in the 80s of the last century, in \cite{Bel87},
 \cite{Bel88}, \cite{Bel92}, see \cite{BoutHanJamQuantFilt} for a readable modern account. There is
 an important work under way on the technical side of organising feedback quantum control in real time,
 see e.g. \cite{Armen02Adaptive}, \cite{Bushev06Adaptive} and \cite{WiMilburnBook}.

 Another recently emerged branch of game theory represent the so-called mean-field games (MFG).
They were initially introduced by M. Huang, R. Malham\'e, P. Caines in \cite{HCM3} and by
J-M. Lasry, P-L. Lions in \cite{LL2006} and have an impressive and ever growing development,
see e.g. recent monographs \cite{BenFr},  \cite{CarDelbook18}, \cite{Gomesbook},
 \cite{KolMalbook19} and references therein.

 In this paper we are merging these two exciting new branches of game theory.
 Building a quantum analog of MFGs requires the full reconstruction of its foundations and methodology,
 because in $N$-particle quantum evolution particles are not separated in individual dynamics and the
 key concept of the classical MFG theory, the empirical measure defined as the sum of Dirac masses of
 the positions of the players is not applicable in quantum setting.
 As a preliminary result we derive (first heuristically and then rigorously)
 the new nonlinear stochastic Schr\"odinger equation, as the limit
 of continuously observed and controlled system of a large number of interacting quantum  particles,
 the result that may have an independent value. There is a huge literature on the derivation of
 deterministic nonlinear equations like Hartree, Gross-Pitaevskii equations, see some review in
 \cite{GolsePaul}. There is a parallel development on the mathematical properties of various nonlinear
 Schr\"odinger equations including stochastic and controlled ones,  see e.g. \cite{BarbRock16},
 \cite{BarbRock18}, \cite{Brzes14}, \cite{GreckGenerNonlinSchr} and references therein. Our equation
 is different from the equations considered in these papers, as the linearity depends on the expectations
 of the correlations calculated with respect to the solution. It arises naturally from continuously
 observed systems and is reminiscent to the McKean-Vlasov nonlinear diffusion. It can be in fact considered
 as some quantum analog of the latter.

 Motivated by this result, we build a correspondence between quantum $N$-agent dynamic games and classical
 $N$-player dynamic games on appropriate Riemannian manifolds in such a way that the corresponding games have
 the same limiting MFG system that describes these games in the limit of infinite number of agents.
 Of course the precise link between the limiting game and the pre-limit $N$-agent games is quite different
 for quantum and classical games. Our main result shows that, similar to the classical setting, solutions
  to the limiting MFG system specify approximate $\ep$-Nash equilibria for $N$-agent quantum games,
  with $\ep$ of order $N^{-1/4}$, which is of quite  different from classically available rates of convergence
  of type $\ep\sim N^{-1/2}$ (see  \cite{HCM3}) or $\ep \sim N^{-1}$ (see \cite{KolTrYa14}).

The content of the paper is as follows. In the next section we recall the basic theory of quantum
continuous measurement and filtering. In section \ref{secmynonlinSchrod} our new nonlinear equations are
introduced in a heuristic manner and the rigorous convergence results are formulated.
The result concerning controlled dynamics is seemingly new even in the deterministic case.

In Section \ref{secmainres} the limiting forward-backward MFG system is introduced
and the main result of the paper is formulating stating
that the solutions to the limiting forward-backward systems (if they exist) determine
the $\ep$-Nash equilibria for the corresponding $N$-agent quantum game. The next three
 sections are devoted to the proof of the three main theorems.

Section \ref{seclimforback} is mostly independent from the rest of the paper and
 can be looked at as an introduction to classical MFGs on compact Riemannian manifolds on the example
 of complex projective spaces, which play the role of the state spaces for finite-dimensional
 quantum mechanics. Based on the discovery from \cite{KolDynQuGames},
  that allows one to organise special homodyne detection schemes of continuous observation
 on finite-dimensional quantum systems in such a way that the resulting diffusion operator turns to the
  standard Laplace-Beltrami operator on a complex projective space, we prove under these arrangements
  the global existence and local well-posedness of the limiting forward-backward MFG system on complex
  projective spaces thus supplying the missing existence result
 from our main theorem at least for finite-dimensional quantum games.

Final Section \ref{secconc} presents some questions arising from our analysis.

\section{Prerequisites: nondemolition observation and quantum filtering}
\label{secnondenmes}

The general theory of quantum non-demolition observation, filtering and resulting
feedback control was built essentially in papers  \cite{Bel87}, \cite{Bel88}, \cite{Bel92}.
A very good readable introduction is given in \cite{BoutHanJamQuantFilt}.
 We shall describe briefly the main result of this theory.

The non-demolition measurement of quantum systems can be organised in two versions:
photon counting and homodyne detection. One of the first mathematical results on the control
with photon counting measurement was given in \cite{Kol92}, which can be used to develop the
corresponding  game theoretical version. But here we fully concentrate on the homodyne
(mathematically speaking, diffusive type) detection. Under this type of measurement the
output process $Y_t$ is a usual Brownian motion (under appropriate probability distribution).
There are several (by now standard) ways of writing down the quantum filtering equation for
states resulting from the outcome of such process. The one which is the most convenient to our
purposes is the following linear Belavkin filtering equation (which is a particular version
of the stochastic Schr\"odinger equation)
describing the a posteriori (pure but not normalized) state:

\begin{equation}
\label{eqqufiBlin}
d\chi =-[iH\chi +\frac12 L^*L \chi ]\,dt+L\chi dY_t,
\end{equation}
where the unknown vector $\chi$ is from the Hilbert space of the observed quantum system,
which we shall sometimes referred to as the atom,
the self-adjoint operator $H$ is the Hamiltonian
of the corresponding initial (non-observed) quantum evolution and the operator $L$
is the coupling operator of the atom to the optical measurement device specifying the chosen version
of the homodyne detection.


The initial derivation of the quantum filtering equation was carried out via the method
of quantum stochastic calculus. Later on more elementary derivations appeared. It can be obtained
from an appropriate limit of sequential discrete observation scheme, see e.g. \cite{BelKol} or
\cite{Pellegrini}. A derivation from the theory of instruments was given in \cite{BarchBel}
 and \cite{Holevo91}.

An important role in the theory is played by the so-called innovation process
\begin{equation}
\label{eqdefinnov}
dB_t=dY_t-\langle L+L^* \rangle_{\chi} \, dt,
\end{equation}
where for an operator $A$ and a vector $v$ in a Hilbert space we use the (more or less standard)
notation for the average value of $A$ in $v$:
\begin{equation}
\label{eqdefmeanvalueoper}
\langle A \rangle_v=(v,Av)/(v,v).
\end{equation}

The innovation process is in some sense a more natural driving noise to deal with,
because it turns out to be the standard Brownian motion (or the Wiener process)
with respect to the fixed (initial vacuum) state of the homodyne detector, while
the output process $Y_t$ is a Brownian motion with respect to the states transformed
by the (quite complicated) interaction of the quantum system and optical device,
which can also be obtained by the Girsanov transformation from the innovation process $B_t$.
Due to \eqref{eqdefinnov}, $dY_t$ satisfies the usual Ito rule: $dY_tdT_t=dt$, which is
the basic tool in all calculations.

A very particular case represent the equations with anti-Hermitian operators $L$: $L^*=-L$.
As seen from \eqref{eqdefinnov}, in this case the innovation process coincides with the output process,
which thus becomes the standard Brownian motion on its own. This means that the noise does not
properly interact with the atom, and therefore this case is the less interesting
for continuous measurement, see discussion in \cite{BarchBook}. Nevertheless the filtering theory
still applies to this case and control can be analysed via the averaging with respect to the noise.
This case is referred to in the theory as conservative, because in this case (as seen from direct application
of Ito's formula) solutions to \eqref{eqqufiBlin} preserve the norm almost surely, that is, the resolving
operators for the Cauchy problem of this equation are unitary almost surely. In the present paper the rigorous
analysis will be carried out only for the conservative case.

The theory extends naturally to the case of several, say $N$, coupling operators $\{L_j\}$,
where the quantum filtering is described by the following direct extension
of equation \eqref{eqqufiBlin}:
 \begin{equation}
\label{eqqufiBlinn}
d\chi =-[iH\chi +\frac12 \sum_j L_j^*L_j \chi ]\,dt+\sum_j L_j\chi dY^j_t,
\end{equation}
with the $N$-dimensional output process $Y_t=\{Y^j_t\}$.
The corresponding innovation process is the standard $N$-dimensional
Wiener process with the coordinate differentials
\[
dW^j_t=dY^j_t-\langle L_j+L_j^* \rangle_{\chi} \, dt.
\]

Recall that the density matrix or density operator $\ga$ corresponding to a unit vector
$\chi\in L^2(X)$ is defined as the orthogonal projection operator on $\chi$.
This operator is usually expressed either as the tensor product  $\ga=\chi\otimes \bar \chi$
or in the most common for physics bra-ket Dirac's notation as $\ga=|\chi\rangle \langle \chi|$.
Of course in the tensor notation $\ga$ is formally an element of the tensor product $L^2(X^2)$.
However, considered as an integral kernel, it is identified with the corresponding integral operator.

As one checks by direct application of Ito's formula,
in terms of the density matrix $\ga$, equation
\eqref{eqqufiBlin} rewrites as
\begin{equation}
\label{eqqufiBlindens}
d\ga=-i[H,\ga]\, dt
+(L\ga L^* -\frac12 L^*L \ga -\frac12 \ga L^*L)\, dt
+(\ga L^*+L \ga) dY.
\end{equation}
In particular, the expectation $\E \ga$ satisfies the following master equation
(sometimes referred to as the Lindblad equation)
\begin{equation}
\label{eqqufiBlindensexp}
d\E \ga=-i[H,\E \ga]
+(L\E \ga L^* -\frac12 L^*L \E \ga -\frac12 \E \ga L^*L)\, dt.
\end{equation}

The theory of quantum filtering reduces the analysis of quantum dynamic control and games
to the controlled version of evolutions \eqref{eqqufiBlinn}. The simplest situation concerns the case
 when the homodyne device is fixed, that is the operators $L_j$ are fixed, and the players can control the
 Hamiltonian $H$, say, by applying appropriate electric or magnetic fields to the atom. Thus equation
 \eqref{eqqufiBlinn} becomes modified by allowing $H$ to depend on one or several control parameters.
 One can even prove a rigorous mathematical result, the so-called separation principle
 (see \cite{BoutHanQuantumSepar}), that shows that the effective control of an observed quantum system
 (that can be based in principle on the whole history of the interaction of the atom and optical devices)
 can be reduced to the Markovian feedback control of the quantum filtering equation, with the feedback
 at each moment depending only on the current (filtered) state of the atom.

\section{A new nonlinear stochastic Schr\"odinger equation}
\label{secmynonlinSchrod}

The well developed theory of the so-called nonlinear stochastic Schr\"odinger
 equations deals with the stochastic equations of the type
\[
d\psi(t)=-iH \psi dt -i f(t, \psi(t)) dt -g(t,\psi(t)) dW(t)
\]
with a Hamiltonian operator $H$, some nonlinear function $f,g$ and various noises $dW$ (including multidimensional),
see e.g. \cite{BarbRock16}, \cite{BarbRock18}, \cite{Brzes14}, \cite{GreckGenerNonlinSchr} and references therein.

For our theory a different type of  nonlinear equation is needed, with nonlinearity depending
additionally on the distribution specified by the wave function solving the equation. It bears
some analogy to the classical McKean-Vlasov diffusions,
though the role of the law of the diffusion is now played by its quantum analog.

To see where it comes from, let $X$ be a Borel space with a fixed Borel measure that we denote by $dx$, and let
us consider the quantum evolution of $N$ particles
driven by the standard interaction Hamiltonian
\begin{equation}
\label{eqHambinaryinter}
 H(N)f(x_1, \cdots , x_N)=\sum_{j=1}^N H_jf(x_1, \cdots , x_N)
 + \frac{1}{N}\sum_{i<j\le N} V(x_i, x_j)f(x_1, \cdots , x_N),
\end{equation}
 where $x_j\in X$, $H_j$ is the Hamiltonian $H$ of a single particle
 (that is, $H$ is a self-adjoint operator in $L_2(X)$)
  applied to the $j$th variable of a function $f(x_1, \cdots ,x_N)$
 and the interaction potential $V(x,y)$  is a symmetric function of two variables,
 and observed via symmetric coupling with one-dimensional (one-particle) operators.
 That is, we consider the filtering equation \eqref{eqqufiBlinn} of the type
\begin{equation}
\label{eqmainNpartBel}
d\Psi_{N,t} =-[iH(N)\Psi +\frac12 \sum_{j=1}^N L_j^*L_j \Psi_{N,t} ]\,dt
+\sum_{j=1}^N L_j\Psi_{N,t} \, dY^j_t,
\end{equation}
where $L_j$ denote the identical one-particle operators $L$ ($L$ is an operator
 in $L^2(X)$) acting on the $j$th variable of the wave function $\Psi_{N,t}$.
The density matrix $\Ga_{N,t}=\Psi_{N,t}\otimes \overline{\Psi_{N,t}}$ satisfies the
corresponding equation of type \eqref{eqqufiBlindens}:
\[
d \Ga_{N,t}
=-i\sum_j [H_j,\Ga_N]-\frac{i}{N} \sum_{l<j\le N} [V_{lj},\Ga_N]
\]
\begin{equation}
\label{eqmainNpartBeldens}
+\sum_j (L_j\Ga_N L_j^* -\frac12 L^*_jL_j \Ga_N -\frac12 \Ga_N L^*_jL_j)\, dt
+\sum_j (\Ga_N L^*_j+L_j \Ga_N) dY_j,
\end{equation}
where $V_{lj}$ denote the operator of multiplication by $V(x_l,x_j)$.

For the application three cases of the space $X$ are of major interest: $X$
a finite set with the standard uniform measure (all points have measure $1$),
$X=\R^d$ with Lebesgue measure and $X$ a compact Riemannian manifold with
the Riemann-Lebesgue volume as the measure.

Recall the standard heuristic argument introducing the nonlinear Schr\"odinger or Hartree equation
in the deterministic case, that is with $L=0$. Assume that the solution $\Psi_{N,t}$
can be written approximately as the product of individual functions $\psi (x_j)$ with the same $\psi$ for all $j$.
(This is the weakest point of the heuristics, as it is not at all obvious, in which sense such an approximation may hold.)
 Each $j$th particle is influenced by the average interaction potential
 $V(x_j,x_m)$ over the position of all particles $m\neq j$. But the distribution of the position of an
 $m$th particle is given by the density $|\psi (x_m)|^2$. Thus the total potential acting on $j$th particle is
 \[
 \frac{1}{N}\sum_{m\neq j}\int V(x_j,x_m) |\psi(x_m)|^2 dx_m=\frac{N-1}{N}\int V(x_j,y) |\psi(y)|^2 dy.
 \]
 For large $N$ this equals approximately to $\int V(x_j,y) |\psi(y)|^2 dy$.
 With such interaction potential the Schr\"odinger equation for each particle gets the form
  \begin{equation}
  \label{eqHartree}
 \dot \psi_t(x) =-iH \psi_t(x) -i (V^{|\psi_t|^2})(x) \psi_t (x),
 \end{equation}
 where $V^{|\psi|^2}$ denotes the function $\int V(x,y) |\psi(y)|^2 dy$,
 which is the standard nonlinear Schr\"odinger or the Hartree equation.

 Very often $V$ is assumed to depend on the difference of the arguments,
 that is, to be of the form $V(x-y)$ with an even function $V$.
 In this case the Hartree equation takes its most familiar form
 \[
 \dot \psi_t(x) =-iH \psi_t(x) -i (V\star |\psi_t|^2)(x) \psi_t (x),
 \]
 where
 \[
 (V\star |\psi|^2)(x)=\int V(x-y) |\psi(y)|^2 dy
 \]
 denotes the convolution.

 In the case of evolution \eqref{eqqufiBlinn} there can be no question of having identical wave functions
 in the product, because they are controlled by different noises.
 However, assuming the initial condition is the product of identical functions,
 and that noises $Y_j$ are independent, we may assume that the terms in the product $\psi_j(x_j)$ are
 independent and identically distributed and hence, by the strong law of large number, there may exist an
 almost sure deterministic limit
 \[
 \xi_t(x)=\lim \frac{1}{N} \sum_{j=1}^N |\psi_{j,t}(x)|^2 =\E |\psi_{j,t}(x)|^2.
 \]
Relating the observations given by $L_j$ to the evolution of $j$th particle,
 we may suggest the following limiting equations for the individual particles:
 \begin{equation}
\label{eqmainnonlinpartBel}
d\psi_{j,t}(x) =-[iH_j\psi_{j,t}(x) +i(V^{\xi_t})(x) \psi_{j,t}(x)
+\frac12  L^*L \psi_{j,t} ]\,dt+L_j\psi_{j,t} \, dY^j_t,
\quad \xi_t(x)=\E |\psi_{j,t}(x)|^2.
\end{equation}

Here, as in \eqref{eqHartree},
$V^{\xi_t}$ denotes the function $\int V(x,y) \xi_t(y) \, dy$.

The corresponding equation for the density matrix \eqref{eqqufiBlindens} has the form
\[
d\ga_{j,t}=-i[H,\ga_{j,t}] \, dt-i[V^{\xi_t}, \ga_{j,t}] \, dt
+(L\ga_{j,t} L^* -\frac12 L^*L \ga_{j,t} -\frac12 \ga_{j,t} L^*L)\, dt
\]
\begin{equation}
\label{eqmainnonlinpartBeldensV}
+(\ga_{j,t} L^*+L \ga_{j,t}) dY^j_t, \quad \xi_t(x)=\E \ga_{j,t}(x,x),
\end{equation}
where, with usual abuse of notation, $V^{\xi_t}$ is considered as the operator of multiplication
by the function $V^{\xi_t}$.

 Below we shall give a rigorous derivation of \eqref{eqmainnonlinpartBel} from
\eqref{eqmainNpartBel} under the strong simplifying assumption of conservativity,
namely under the condition $L^*=-L$, in which case the output processes $Y_j$ are
standard independent Brownian motions (see Section \ref{secnondenmes}).

    Apart from the pairwise interaction expressed by a multiplication operator, another standard class
    of binary interactions (specifically often used in finite-dimensional quantum mechanics)
    is expressed by integral operators $A$ with kernels $A(x,y;x',y')$ that act
    on the functions of two variables as
\[
A\psi(x,y)=\int_{X^2}   A(x,y;x',y') \psi(x',y') \, dx' dy'.
\]
It is usually assumed (and we shall do it) that $A$ are symmetric in the sense that they take
symmetric functions $\psi(x,y)$ (symmetric with respect to permutation of $x$ and $y$) to
symmetric functions. For the kernels this means that they are symmetric with respect to the
simultaneous exchange of the first pair of variables and the second one:
\[
A(x,y;x',y')=A(y,x;y',x').
\]

 With such interaction the $N$ particle Hamiltonian becomes
\begin{equation}
\label{eqHambinaryinter1}
 H(N)f(x_1, \cdots , x_N)=\sum_{j=1}^N H_jf(x_1, \cdots , x_N)
 + \frac{1}{N}\sum_{i<j\le N} A_{ij}f(x_1, \cdots , x_N),
\end{equation}
with $A_{ij}$ denoting the operator $A$ acting on the variables $x_i,x_j$ of $f$,
and the equation for the density matrix $\Ga_{N,t}=\Psi_{N,t}\otimes \overline{\Psi_{N,t}}$
is given by \eqref{eqmainNpartBeldens} with $A_{lj}$ instead $V_{lj}$.

The corresponding analog of nonlinear equation \eqref{eqmainnonlinpartBel} can be written in the form
\begin{equation}
\label{eqmainnonlinpartBel1}
d\psi_{j,t}(x) =-[iH_j\psi_{j,t}(x) +iA^{\bar \eta_t} \psi_{j,t}(x)
+\frac12  L^*L \psi_{j,t}(x) ]\,dt+L_j\psi_{j,t}(x) \, dY^j_t,
\end{equation}
where $A^{\bar \eta_t}$ is the integral operator in $L^2(X)$ with the integral kernel
\[
A^{\bar \eta_t}(x;y)=\int_{X^2} A(x,y;x',y') \overline{\eta_t (y,y')} \, dydy'
\]
and
\[
\eta_t (y,z)=\E (\psi_{j,t} (y) \bar \psi_{j,t}(z)),
\]
with the equation for the density matrix \eqref{eqqufiBlindens} being
\[
d\ga_{j,t}=-i[H,\ga_{j,t}] \, dt-i[A^{\bar \eta_t}, \ga_{j,t}]
+(L\ga_{j,t} L^* -\frac12 L^*L \ga_{j,t} -\frac12 \ga_{j,t} L^*L)\, dt
\]
\begin{equation}
\label{eqmainnonlinpartBel1dens}
+(\ga_{j,t} L^*+L \ga_{j,t}) dY^j_t, \quad \eta_t(y,z)=\E (\psi_{j,t} (y) \bar \psi_{j,t}(z))=\E \ga_{j,t}(y,z).
\end{equation}

For the expectation $\eta_t(y,z)= \E \ga_{j,t}(y,z)$ we get the following
nonlinear version of the Lindblad equations:
 \begin{equation}
\label{eqmainnonlinpartBel1densex}
d\eta_t=-i[H,\eta_t] \, dt-i[A^{\bar \eta_t}, \eta_t]
+(L\eta_t L^* -\frac12 L^*L \eta_t -\frac12 \eta_t L^*L)\, dt.
\end{equation}

The density matrix $\eta_t$ can be considered as the quantum analog of the empirical measure
of classical particles, and equation \eqref{eqmainnonlinpartBel1densex} as the quantum analog
of the McKean-Vlasov equation of nonlinear diffusion.

Formally the case of the multiplication operators by $V(x-y)$ can be considered as
a particular case of the integral operator with the singular kernel
\[
V(x-y) \de(x-x')\de(y-y').
\]
In finite dimensional setting this just means that the matrix of the corresponding integral operator is diagonal.

The story extends naturally to the case when the measurement related to each particle is multi-dimensional,
that is the operator $L$ is vector-valued, $\L=(L^1, \cdots, L^k)$, in which case each noise $dY^j_t$ is also
$k$-dimensional, so that the term $\L_j\psi_{j,t} \, dY^j_t$ should be understood as the inner product,
\[
\L_j\psi_{j,t} \, dY^j_t=\sum_{l=1}^k L_j^l\psi_{j,t} \, dY^j_{l,t},
\]
with all other terms containing $L$ understood in the same way.

For the rigorous derivation of the Hartree equation \eqref{eqHartree} from the corresponding
$N$-particle evolution several ingenues methods were developed recently, see a review in \cite{GolsePaul}.
Our analysis of the stochastic situation will be carried out via the method suggested by Pickl,
see \cite{Pickl} and  \cite{KnowlesPickl}, appropriately adapted and modified to address the stochastic setting.
In Pickl's approach the main measures of the deviation of the solutions $\Psi_{N,t}$ to $N$-particle
systems from the product of the solutions to the Hartree equations are the following positive numbers
from the interval $[0,1]$:
\[
E^{(k)}_{N,t} =1-(\psi_t^{\otimes k}, \Ga_{N,t} \psi_t^{\otimes k})
\]
and in particular,
\[
\al_N(t)=E^{(1)}_{N,t}.
\]
Clearly, if $\Ga_{N,t}$ were the tensor product of $\psi_t$, then one would have $E^{(k)}_{N,t}=0$.
Hence the convergence $E^{(k)}_{N,t}\to 0$, as $N\to \infty$, expresses some kind of convergence
of $\Ga_{N,t}$ to the product state.
As was shown in \cite{KnowlesPickl},
\begin{equation}
E^{(k)}_{N,t}\le k E^{(1)}_{N,t},
\end{equation}
so that for the convergence of all $E^{(k)}_{N,t}$ it is sufficient to show the convergence
$\al_N(t)\to 0$.

In the present stochastic case, the quantities $E^{(k)}_{N,t}$ depend not just on the number $k$ of particles
in the product, but on the concrete choice of these particles. For instance, the proper stochastic analog
of the quantity $\al_N(t)$ is the collection of random variables
\begin{equation}
\label{eqforalpha}
\al_{N,j}(t)=  1-(\psi_{j,t}, \Ga_{N,t} \psi_{j,t})=1-{\tr}(\ga_{j,t} \Ga_{N,t}).
\end{equation}
Here $\ga_{j,t}$ is identified with the operator in $L^2(X^N)$ acting on the $j$th variable.

Due to the i.i.d. property of the solutions to  \eqref{eqmainnonlinpartBel} or \eqref{eqmainnonlinpartBel1},
the expectations $\E E^{(k)}_{N,t}$ and in particular $\E \al_N(t)=\E \al_N(j,t)$ are well defined (they do not
depend on a particular choice of $k$ particles).

Expressions $\al_{N,j}$ can be linked with the traces by the following inequalities, due to Knowles and Pickl:
\begin{equation}
\label{ineqKnPi}
\al_{N,j}(t)\le {\tr} |\Ga^{(j)}_{N,t}-\ga_{j,t}|\le 2\sqrt{2\al_{N,j}(t)},
\end{equation}
where $\Ga^{(j)}_{N,t}$ is the partial trace of $\Ga_{N,t}$ with respect to all variables except for the $j$th,
see Lemma 2.3 from \cite{KnowlesPickl}.

The following result shows that heuristical arguments given above can be corroborated
by the rigorous analysis.

\begin{theorem}
\label{thmynonlinSch}
Let the operators $H,L$ be bounded and the interaction be given either by the multiplication by a bounded
symmetric function $V(x,y)\in L_{\infty}(X^2)$ or by a symmetric self-adjoint integral operator $A$ with a Hilbert-Schmidt
kernel, that is a kernel $A(x,y;x',y')$ such that
\begin{equation}
\label{eq1thmynonlinSch}
\|A\|^2_{HS}=\int_{X^4} |A(x,y;x',y')|^2 \, dx dy dx'dy' <\infty,
\end{equation}
\begin{equation}
\label{eq1athmynonlinSch}
A(x,y;x'y')=A(y,x;y',x'), \quad A(x,y;x',y')=\overline{A(x',y';x,y)}.
\end{equation}

Let $L$ be anti-Hermitian, $L^*=-L$, and the noises $Y_j$ be independent standard Brownian motions.

Let $\Psi_{N,t}$ be a solution to the $N$-particle equation \eqref{eqmainNpartBel}
with $H(N)$ of type either \eqref{eqHambinaryinter} or \eqref{eqHambinaryinter1}, with some initial condition
$\Psi_{N,0}$, $\|\Psi_{N,0}\|_2=1$. Let $\psi_{j,t}$ be solutions to equations \eqref{eqmainnonlinpartBel} with
the identical initial conditions
$\psi_{j,0}=\psi\in L^2(X)$, $\|\psi_{j,0}\|=1$.

Then
\begin{equation}
\label{eq2thmynonlinSch}
\E \al_N(t) \le e^{7t\|A\|_{HS}}\al_{N}(0)+(e^{7t\|A\|_{HS}}-1)\frac{1}{\sqrt N},
\end{equation}
and
\begin{equation}
\label{eq3thmynonlinSch}
\E \al_N(t) \le e^{7t\|V\|_{\infty}}\left(\al_{N}(0)+\frac{1}{N}\right)
+\frac{2\|V\|_{\infty}}{\sqrt N} \int_0^t e^{7(t-s)\|V\|_{\infty}}
 \int_X \sqrt{\E (|\psi_{j,s}(z)|^4)} \, dz\, ds,
\end{equation}
in case of the integral operator and the multiplication operator of interaction respectively.
\end{theorem}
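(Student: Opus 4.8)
The plan is to control the one-particle discrepancy $\al_{N,j}(t)=1-\tr(\ga_{j,t}\Ga_{N,t})$ from \eqref{eqforalpha} by applying Ito's formula to the product $\tr(\ga_{j,t}\Ga_{N,t})$, where $\ga_{j,t}$ solves the density-matrix equation \eqref{eqmainnonlinpartBeldensV} (identified with an operator acting on the $j$th variable of $L^2(X^N)$) and $\Ga_{N,t}$ solves \eqref{eqmainNpartBeldens}. Writing $p_j=\ga_{j,t}=|\psi_{j,t}\rangle\langle\psi_{j,t}|$ and $q_j=\1-p_j$, so that $\al_{N,j}(t)=(\Psi_{N,t},q_j\Psi_{N,t})$ measures the weight of the ``non-condensate'' in the $j$th variable, the Ito expansion of $d\,\tr(\ga_{j,t}\Ga_{N,t})$ splits into a drift, a martingale, and Ito cross-variation terms. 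Since the driving noises $Y^k$ are independent standard Brownian motions, the only surviving cross-variation is the one produced by the common noise $Y^j$ on the matched slot, namely $\tr[(\ga_{j,t}L_j^*+L_j\ga_{j,t})(\Ga_{N,t}L_j^*+L_j\Ga_{N,t})]\,dt$.

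The first decisive simplification is the conservative hypothesis $L^*=-L$. Substituting $L_j^*=-L_j$ and using cyclicity of the trace together with the fact that $\ga_{j,t}$ commutes with every $H_k$, $L_k$, and $V_{lk}$ for $k\neq j$, a direct computation shows that on the matched slot the Lindblad dissipative contributions from both \eqref{eqmainnonlinpartBeldensV} and \eqref{eqmainNpartBeldens} cancel exactly against the cross-variation term, while on each slot $k\neq j$ the Lindblad contribution vanishes on its own; likewise all free commutators $[H_k,\cdot]$ cancel. Hence, after taking expectations (which annihilates the martingale part), the entire derivative of $\E\al_{N,j}(t)$ reduces to the interaction drift
\[
\frac{d}{dt}\,\E\,\al_{N,j}(t)=i\,\E\big(\Psi_{N,t},[V^{\xi_t}_j,p_j]\Psi_{N,t}\big)+\frac{i}{N}\sum_{l<k}\E\big(\Psi_{N,t},[p_j,V_{lk}]\Psi_{N,t}\big),
\]
where $V^{\xi_t}_j$ denotes multiplication by $V^{\xi_t}(x_j)$, with the evident analogue in which $A^{\bar\eta_t}$ and $A_{lk}$ replace $V^{\xi_t}_j$ and $V_{lk}$ for the integral-operator interaction.

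Since $p_j$ acts only on the $j$th variable, $[p_j,V_{lk}]=0$ unless $j\in\{l,k\}$, so the double sum collapses to the $N-1$ pairs containing $j$, i.e.\ to $\frac1N\sum_{k\neq j}[p_j,V_{jk}]$. The core of the argument is the mean-field comparison of this sum with the term $[V^{\xi_t}_j,p_j]$. Inserting $\1=p_k+q_k$ in the $k$th variable and using the identity $p_kV_{jk}p_k=p_k V^{p_k}_j$, where $V^{p_k}_j$ is multiplication by $\int V(x_j,y)|\psi_{k,t}(y)|^2\,dy$, the ``diagonal'' piece becomes $\frac1N\sum_{k\neq j}(V^{p_k}_j-V^{\xi_t}_j)$. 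Because $\xi_t=\E|\psi_{k,t}|^2$ holds by the very definition of the nonlinear equation, each summand has zero expectation, so this is an average over $k$ of mean-zero, identically distributed fluctuations, whose size inside the state $\Psi_{N,t}$ is of order $N^{-1/2}$; the mismatch between $\frac{N-1}{N}$ and $1$ produces the additional $O(N^{-1})$ appearing in \eqref{eq3thmynonlinSch}. Every remaining term keeps at least one factor $q_k$, and is bounded by Cauchy--Schwarz by a multiple of $\|q_k\Psi_{N,t}\|^2=\al_{N,k}(t)$, whose expectation equals $\E\al_N(t)$ by the i.i.d.\ symmetry of the $\psi_{k,t}$. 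Collecting the estimates yields a differential inequality $\frac{d}{dt}\E\al_N(t)\le 7\|V\|_\infty\,\E\al_N(t)+S(t)\,N^{-1/2}$ with an explicit source $S(t)$, and Gr\"onwall's lemma gives \eqref{eq3thmynonlinSch}; the integral-operator case is structurally identical, the Hilbert--Schmidt bound $\|A^{\bar\eta_t}\|\le\|A\|_{HS}$ (valid since $\|\psi_{k,t}\|=1$) replacing $\|V\|_\infty$ and yielding the cleaner source that integrates to \eqref{eq2thmynonlinSch}.

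I expect the main obstacle to be precisely this fluctuation estimate for the diagonal piece. In contrast with the classical McKean--Vlasov setting, the projectors $p_{k,t}$ are genuinely random and are driven by the same family of noises $\{Y^k\}$ as $\Psi_{N,t}$, so the law-of-large-numbers control of $\frac1N\sum_{k\neq j}(V^{p_k}_j-V^{\xi_t}_j)$ cannot be separated from the expectation taken in the $N$-particle state and must be performed jointly. Securing the sharp $N^{-1/2}$ rate --- and, in the multiplication case, identifying the explicit weight $\int_X\sqrt{\E|\psi_{j,s}(z)|^4}\,dz$ from the variance of the random potential $V^{p_k}_j$ --- is where Pickl's combinatorial scheme requires the most careful adaptation to the stochastic setting.
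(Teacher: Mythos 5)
Your overall architecture reproduces the paper's own proof: the same Ito computation exploiting $L^*=-L$ to kill the stochastic and dissipative contributions to $d\,{\tr}(\ga_{j,t}\Ga_{N,t})$, the same reduction to the interaction commutator over the pairs containing $j$, the same Pickl-type cancellation identity $\ga_{m,t}A_{jm}\ga_{m,t}=\ga_{m,t}A_j^{\overline{\ga_{m,t}}}$ after inserting $\1=\ga_{m,t}+q_{m,t}$, the same law-of-large-numbers treatment of the diagonal fluctuation $\frac1N\sum_{m\neq j}(A_j^{\overline{\ga_{m,t}}}-A_j^{\bar\eta_t})$, and Gronwall to close. So the route is the right one; the issue is a specific estimate inside it.

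The genuine gap is your claim that ``every remaining term keeps at least one factor $q_k$, and is bounded by Cauchy--Schwarz by a multiple of $\|q_k\Psi_{N,t}\|^2=\al_{N,k}(t)$.'' This fails for the cross term in which the interaction is sandwiched between projectors $\ga$ on the left and projectors $q$ on the right, namely
\[
\frac1N\sum_{m\neq j}\bigl|\bigl(\Psi_{N,t},\,\ga_{j,t}\ga_{m,t}A_{mj}\,q_{m,t}q_{j,t}\Psi_{N,t}\bigr)\bigr|.
\]
Cauchy--Schwarz here yields $\|A\|\,\|q_{m,t}q_{j,t}\Psi_{N,t}\|\le\|A\|\sqrt{\al_{N,j}(t)}$ per summand: only one power of $q$ can be paid for, the averaging over $m$ gives no factor $N^{-1/2}$, and the bound is of order $\sqrt{\al_N}$, not $\al_N$. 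The resulting differential inequality $\frac{d}{dt}\E\al_N\le C\bigl(\E\al_N+\sqrt{\E\al_N}+N^{-1/2}\bigr)$ does not close: with $\al_N(0)=0$ it gives only $\E\al_N(t)\lesssim t^2$, with no decay in $N$ whatsoever. This term is exactly where the counting-operator trick of Pickl and Knowles--Pickl is indispensable (weights built from $\hat m_N=\frac1N\sum_k q_{k,t}$ and its inverse powers are inserted and commuted through $A_{mj}$ at a cost $O(1/N)$); the paper does not reproduce that argument either, but explicitly quotes the resulting estimate $\E I_2\le 2\|A\|_{HS}\bigl(\E\al_N+\sqrt2\,(\E\al_N+\frac1N)\bigr)$ from p.~116 of Knowles--Pickl. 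Relatedly, you locate the ``main obstacle'' in the fluctuation estimate for the diagonal piece $\frac1N\sum_{m\neq j}(V_j^{p_m}-V_j^{\xi_t})$; in the paper that part is elementary --- each summand has mean zero and, by independence of the $\psi_{m,t}$, variance $O(1/N)$, giving the $N^{-1/2}$ source term directly --- while the cross term above is the real difficulty that your sketch leaves unresolved.
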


The proof will be given in Section \ref{secproofconv}.
Let us make some comments. The assumption that $H$ and $L$ are bounded is not essential, and was made only to
simplify the presentation. In fact, as seen from the proof, neither $H$ nor $L$ enter any essential calculations or bounds,
so that to include unbounded $H$ and $L$ one just has to carefully describe all domains. The assumption of boundedness of $V$ can be
essentially relaxed, for instance the assumption of the deterministic case developed in \cite{KnowlesPickl}, that is,
$V\in L^2(\R^d)+L_{\infty}(\R^d)$ should work.

Unlike simple estimate \eqref{eq2thmynonlinSch},
 application of \eqref{eq3thmynonlinSch} requires some additional estimates of the r.h.s.,
 which we are not dealing with here,
 paying the main attention to the integral type interactions.

Of course, everything remains unchanged for a vector-valued $L$.

Our key restrictive assumption is the conservativity $L^*=-L$. By-passing it would require certain additional ideas.

For the application to the control theory some further extension is needed.
Let us formulate this result (proof will be given in Section \ref{secproofconvco})
  for the case of the integral interaction only. Let us assume that
the individual Hamiltonian $H$ has a control component, that is, it can be written as $H+u\hat H$ with two self-adjoint
operators $H$ and $\hat H$ and $u$ a real control parameter. Suppose that, for the limiting evolution, $u$ is chosen as a
 certain function of an observed density matrix $\ga_{j,t}$, that is $u=u(t,\ga_{j,t})$, while in the approximating $N$
 particle evolution one chooses $u$ based on the approximation $\Ga^{(j)}_{N,t}$ of $\ga_{j,t}$, that is as
 $u=u(t,\Ga^{(j)}_{N,t})$, where  $\Ga^{(j)}_{N,t}$ denotes the partial trace of $\Ga_{N,t}$ over all variables except for
 the $j$th. Thus the $N$ particle evolution \eqref{eqmainNpartBeldens} generalizes to the following nonlinear evolution
 (where we again omit the index $t$):
 \[
d \Ga_N
=-i\sum_j [H_j+u(t,\Ga^{(j)}_N) \hat H_j, \Ga_N] -\frac{i}{N} \sum_{l<j\le N} [A_{lj},\Ga_N]
\]
\begin{equation}
\label{eqmainNpartBeldensco}
+\sum_j (L_j\Ga_N L_j^* -\frac12 L^*_jL_j \Ga_N -\frac12 \Ga_N L^*_jL_j)\, dt
+\sum_j (\Ga_N L^*_j+L_j \Ga_N) dY_j,
\end{equation}
And the equation \eqref{eqmainnonlinpartBel1} generalizes to the following equation
 \begin{equation}
\label{eqmainnonlinpartBel1co}
d\psi_{j,t}(x) =-[i(H_j+u(t,\ga_{j,t})\hat H_j +A^{\bar \eta_t}) \psi_{j,t}(x)
+\frac12  L^*L \psi_{j,t}(x) ]\,dt+L_j\psi_{j,t}(x) \, dY^j_t.
\end{equation}

\begin{theorem}
\label{thmynonlinSchco}
Under all the assumptions of Theorem \ref{thmynonlinSch}, but assuming evolutions
\eqref{eqmainNpartBeldensco} and \eqref{eqmainnonlinpartBel1co} instead of
\eqref{eqmainNpartBeldens} and \eqref{eqmainnonlinpartBel1} respectively,
and assuming that the function $u(\ga)$ is Lipschitz in the sense that
\begin{equation}
\label{eq1thmynonlinSchco}
|u(t,\ga)-u(t,\tilde \ga)|\le \ka \, {\tr} |\ga -\tilde \ga|,
\end{equation}
it follows that the estimate \eqref{eq2thmynonlinSch} generalizes to the estimate

\begin{equation}
\label{eq2thmynonlinSchco}
\E \al_N(t) \le \exp\{ 7(\|A\|_{HS}+\ka \|\hat H\|)t\} \al_{N}(0)
+(\exp\{7(\|A\|_{HS}+\ka \|\hat H\|)t\}-1)\frac{1}{\sqrt N}.
\end{equation}
\end{theorem}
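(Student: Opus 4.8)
The plan is to repeat the Itô/Gr\"onwall argument underlying Theorem~\ref{thmynonlinSch}, carried out for the quantity $\al_{N,j}(t)=1-{\tr}(\ga_{j,t}\Ga_{N,t})$, but now with $\ga_{j,t}$ and $\Ga_{N,t}$ evolving according to the controlled equations \eqref{eqmainnonlinpartBel1co} and \eqref{eqmainNpartBeldensco}. The decisive observation is that the control parameter $u$ enters \emph{only} the Hamiltonian drift and leaves the coupling operators $L_j$ untouched. Consequently every dissipative term, every stochastic (Itô) term driven by $dY_j$, and every quadratic-variation correction ${\tr}\big((d\ga_{j,t})(d\Ga_{N,t})\big)$ is structurally the same as in the proof of Theorem~\ref{thmynonlinSch}; the same is true of the contribution built from the interaction $A$. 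Thus none of these has to be re-estimated: they reproduce verbatim the bound $\tfrac{d}{dt}\E\al_N(t)\le 7\|A\|_{HS}\big(\E\al_N(t)+N^{-1/2}\big)$ that yields \eqref{eq2thmynonlinSch}. Only the Hamiltonian contribution must be re-examined.

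Next I would isolate the genuinely new term. Expanding the Hamiltonian drifts of $d\ga_{j,t}$ (with scalar coefficient $u(t,\ga_{j,t})$) and of $d\Ga_{N,t}$ (with coefficients $u(t,\Ga^{(k)}_{N,t})$, $1\le k\le N$) and pairing them through the trace, all off-diagonal summands ($k\neq j$) drop out, both free and control, because $[\hat H_k,\ga_{j,t}]=0$ and $u$ is a scalar; the free parts $H_j$ then cancel exactly, since, as in Pickl's scheme, the contribution of $-i[H_j,\ga_{j,t}]$ to ${\tr}((d\ga_{j,t})\Ga_{N,t})$ and that of $-i[H_j,\Ga_{N,t}]$ to ${\tr}(\ga_{j,t}\,d\Ga_{N,t})$ carry equal magnitude and opposite sign. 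What survives is a single term coming from the \emph{mismatch of the control values},
\begin{equation}
\label{eqctrlterm}
-\,i\big(u(t,\ga_{j,t})-u(t,\Ga^{(j)}_{N,t})\big)\,{\tr}\big(\ga_{j,t}[\hat H_j,\Ga_{N,t}]\big)\,dt,
\end{equation}
which is exactly the obstruction to the cancellation and therefore the only place where $\ka$ and $\|\hat H\|$ can enter the final rate.

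The core of the argument is to show that \eqref{eqctrlterm} is bounded by a constant times $\ka\|\hat H\|\,\al_{N,j}(t)\,dt$, i.e.\ that it is \emph{linear} in $\al_{N,j}$ rather than of order $\sqrt{\al_{N,j}}$; this is what allows it to be absorbed into the Gr\"onwall coefficient. The first factor is controlled by the Lipschitz hypothesis \eqref{eq1thmynonlinSchco}, giving $|u(t,\ga_{j,t})-u(t,\Ga^{(j)}_{N,t})|\le \ka\,{\tr}|\ga_{j,t}-\Ga^{(j)}_{N,t}|$. For the second factor I would exploit that $\ga_{j,t}$ is a rank-one projection, so ${\tr}(\ga_{j,t}[\hat H,\ga_{j,t}])=0$; subtracting this vanishing term lets one replace $\Ga_{N,t}$ by the reduced-density-matrix difference and estimate ${\tr}(\ga_{j,t}[\hat H_j,\Ga_{N,t}])={\tr}([\ga_{j,t},\hat H]\,(\Ga^{(j)}_{N,t}-\ga_{j,t}))$ in absolute value by $\|[\ga_{j,t},\hat H]\|\,{\tr}|\Ga^{(j)}_{N,t}-\ga_{j,t}|\le \|\hat H\|\,{\tr}|\Ga^{(j)}_{N,t}-\ga_{j,t}|$. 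Applying the Knowles--Pickl inequality \eqref{ineqKnPi}, ${\tr}|\ga_{j,t}-\Ga^{(j)}_{N,t}|\le 2\sqrt{2\al_{N,j}(t)}$, to \emph{both} factors, their product is $O\big(\ka\|\hat H\|\,\al_{N,j}(t)\big)$, as required.

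Finally, taking expectations kills the martingale parts, and since the solutions are i.i.d.\ one has $\E\al_{N,j}(t)=\E\al_N(t)$. Adding the new contribution to the inherited bound yields $\tfrac{d}{dt}\E\al_N(t)\le 7(\|A\|_{HS}+\ka\|\hat H\|)\E\al_N(t)+7\|A\|_{HS}N^{-1/2}$, and Gr\"onwall's lemma together with the trivial estimate $\|A\|_{HS}\le \|A\|_{HS}+\ka\|\hat H\|$ in the inhomogeneous term produces \eqref{eq2thmynonlinSchco}. I expect the main difficulty to be precisely the third step: securing the $\al$-linear (not $\sqrt{\al}$) bound on \eqref{eqctrlterm}, which hinges on the projection identity ${\tr}(\ga_{j,t}[\hat H,\ga_{j,t}])=0$ together with the double use of \eqref{ineqKnPi}, and then matching the numerical constant to the factor $7$ already present in Theorem~\ref{thmynonlinSch} (routine but slightly delicate bookkeeping, in which the purely imaginary nature of ${\tr}(\ga_{j,t}[\hat H_j,\Ga_{N,t}])$ and the sharp commutator bound $\|[\ga_{j,t},\hat H]\|\le\|\hat H\|$ for a rank-one projection keep the constant under control). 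A secondary point to verify is that the state-dependence of $u(t,\Ga^{(j)}_{N,t})$, which makes \eqref{eqmainNpartBeldensco} a genuinely nonlinear equation, does not affect this a priori estimate, since Itô's formula is applied to a given solution and the control enters only as a bounded adapted scalar coefficient.
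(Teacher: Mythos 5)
Your proposal follows the same route as the paper's proof: since the control enters only the Hamiltonian drift, the dissipative, stochastic and interaction terms are handled verbatim as in Theorem \ref{thmynonlinSch}, and the only new contribution to $\dot \al_{N,j}(t)$ is the control-mismatch term, which the paper (equation \eqref{eqderalincon}) writes as
\[
i \, {\tr} \bigl(\bigl[(u(t,\Ga^{(j)}_{N,t})-u(t,\ga_{j,t}))\hat H,\, q_{j,t}\bigr]\Ga_{N,t}\bigr),
\]
equivalent to your commutator expression since $q_{j,t}=\1-\ga_{j,t}$. Your structural claims (vanishing of the off-diagonal $k\neq j$ terms, exact cancellation of $H_j$, reality of the surviving term, the bound $\|[\ga_{j,t},\hat H]\|\le \|\hat H\|$ for a projection) are all correct, and your central point --- that the Lipschitz hypothesis \eqref{eq1thmynonlinSchco} combined with \eqref{ineqKnPi} makes this term \emph{linear} in $\al_{N,j}$, so that it can be absorbed into the Gronwall coefficient --- is exactly the paper's argument.

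The one place you deviate is the estimate of the trace factor, and as written it costs you the stated constant. You bound $|{\tr}(\ga_{j,t}[\hat H_j,\Ga_{N,t}])| = |{\tr}([\ga_{j,t},\hat H](\Ga^{(j)}_{N,t}-\ga_{j,t}))| \le \|\hat H\|\, {\tr}|\Ga^{(j)}_{N,t}-\ga_{j,t}|$ and then apply \eqref{ineqKnPi} a \emph{second} time, so your new term is bounded by
\[
\ka\, 2\sqrt{2\al_{N,j}(t)}\cdot \|\hat H\|\, 2\sqrt{2\al_{N,j}(t)} = 8\,\ka \|\hat H\|\,\al_{N,j}(t).
\]
Since $8>7$, Gronwall then yields the exponent $(7\|A\|_{HS}+8\ka\|\hat H\|)t$, which is strictly weaker than the claimed \eqref{eq2thmynonlinSchco}; so the "routine bookkeeping" you defer is precisely where your route falls short. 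The paper instead uses \eqref{ineqKnPi} only once (on the Lipschitz factor) and estimates the trace factor directly through the rank-one structure of $\Ga_{N,t}$:
\[
|{\tr}(\hat H q_{j,t}\Ga_{N,t})| = |(\Psi_{N,t},\hat H q_{j,t}\Psi_{N,t})| \le \|\hat H\|\,\|q_{j,t}\Psi_{N,t}\| = \|\hat H\|\sqrt{\al_{N,j}(t)},
\]
so the commutator contributes at most $2\|\hat H\|\sqrt{\al_{N,j}}$ and the total is $4\sqrt 2\, \ka\|\hat H\|\,\al_{N,j} \le 7 \ka\|\hat H\|\,\al_{N,j}$, which does fit under the factor $7$. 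Note that your own "purely imaginary" remark already contains this sharper bound: $(\Psi_{N,t},\hat H\Psi_{N,t})$ is real, so the imaginary part of $(\Psi_{N,t},\hat H q_{j,t}\Psi_{N,t})$ is bounded by $\|\hat H\|\,\|q_{j,t}\Psi_{N,t}\|$. Replacing your double use of \eqref{ineqKnPi} by this single Cauchy--Schwarz step turns your proposal into a complete proof of \eqref{eq2thmynonlinSchco} with the constant as stated.
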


The comments related to the previous theorem remain valid.
However the boundedness of the control part $\hat H$ of the control
Hamiltonian is essential (unlike the boundedness of $H$ and $L$):
its norm $\|\hat H\|$ explicitly enters the final estimate.

\section{Quantum MFG: main result}
\label{secmainres}


Let us consider the quantum dynamic game of $N$ players, where the dynamics
of the density matrix $\Ga_{N,t}$ is given by the controlled dynamics of type \eqref{eqmainNpartBeldensco},
though the control of each player can be chosen independently on the basis of its 'position' $\Ga^{(j)}_{N,t}$:

 \[
d \Ga_{N,t}
=-i\sum_j [H_j+u_j(t,\Ga^{(j)}_{N,t}) \hat H_j, \Ga_{N,t}] -\frac{i}{N} \sum_{l<j\le N} [A_{lj},\Ga_{N,t}]
\]
\begin{equation}
\label{eqmainNpartBeldenscoindiv}
+\sum_j (L_j\Ga_{N,t} L_j^* -\frac12 L^*_jL_j \Ga_{N,t} -\frac12 \Ga_{N,t} L^*_jL_j)\, dt
+\sum_j (\Ga_{N,t} L^*_j+L_j \Ga_{N,t}) dY^j_t.
\end{equation}

Assume further that control $u$ can be chosen from some bounded closed interval $U$ of the real line,
 that the initial matrix is the product of identical states,
\[
\Ga_{N,0}(x_1, \cdots, x_n; y_1, \cdots, y_N)=\prod_{j=1}^N \psi(x_j)\overline{\psi(y_j)},
\]
and that the payoff of each player on the interval $[t,T]$ is given by the expression
 \begin{equation}
\label{eqcostfunin}
P_j(t, W ; u(.)) =\int_t^T   \left( {\tr} (J_j \Ga_{N,s}) -\frac{c}{2} u_j^2(s)\right)\, ds +{\tr} (F_j\Ga_{N,T}),
\end{equation}
where $J$ and $F$ are some operators in $L^2(X)$ expressing the current and
the terminal costs of the agent, $J_j$ and $F_j$ denote their actions
on the $j$th variable, $c$ measures the cost of applying the control.

Notice for clarity that by the property of the partial trace, the payoff
\eqref{eqcostfunin} rewrites as
 \begin{equation}
\label{eqcostfunineq}
P_j(t, W ; u(.)) =\int_t^T  \left( {\tr} (J_j \Ga^{(j)}_{N,s}) -\frac{c}{2} u_j^2(s)\right)\, ds
+{\tr} (F_j\Ga^{(j)}_{N,T}),
\end{equation}
so that it really depends explicitly only on the individual partial traces $\Ga^{(j)}_{N,t}$,
which can be considered as quantum analogs of the positions of classical particles.

The limiting evolution of each player can be expected to be  described by the equations
\[
d\ga_{j,t}=-i[H+u_j(t,\ga_{j,t}) \hat H,\ga_{j,t}] \, dt-i[A^{\overline{\eta_t}}, \ga_{j,t}] \, dt
+(L\ga_{j,t} L^* -\frac12 L^*L \ga_{j,t} -\frac12 \ga_{j,t} L^*L)\, dt
\]
\begin{equation}
\label{eqmainnonlinpartBeldens}
+(\ga_{j,t} L^*+L \ga_{j,t}) dY^j_t, \quad \eta_t(x,y)=\lim_{N\to \infty}\frac{1}{N}\sum_{j=1}^N \ga_{j,t}(x,y),
\end{equation}
with payoffs given by
 \begin{equation}
\label{eqcostfuninlim}
P_j(t, W ; u(.)) =\int_t^T  \left(  {\tr} (J \ga_{j,s}) -\frac{c}{2} u_j^2(s)\right)\, ds +{\tr} (F\ga_{j,T}).
\end{equation}

\begin{remark} Let us stress that we are not stating that evolution \eqref{eqmainnonlinpartBeldens}
is in fact the limiting one for the $N$-agent quantum evolution in this general case,
because we really do not need it. We need it only in case when almost all players (actually except
for one only) are playing the same strategy $u_t^{com}$, in which case $\eta_t=\E \ga_{j,t}$ for all $j$ adhering
to the common strategy. In which sense this is the limiting evolution will be make explicit in the proof
of our main Theorem \ref{mainresult}.
\end{remark}

Let us say that the pair of functions $u^{MFG}_t(\ga)=u^{MFG}(t,\ga)$ with $t\in [0,T]$ 
and $\ga$ from the set of density matrices in $L^2(X)$, $u\in U$,
and $\eta^{MFG}_t(x,y)$ with $x,y\in X$, $t\in [0,T]$, solve the limiting MFG problem if
(i) $u_t(\ga)$ is an optimal feedback strategy for the stochastic control problem \eqref{eqmainnonlinpartBeldens},
\eqref{eqcostfuninlim} under the fixed function $\eta_t=\eta_t^{MFG}$ and (ii) $\eta_t^{MFG}$ arises
from the solution of \eqref{eqmainnonlinpartBeldens} under fixed $u_t=u_t^{MFG}$.

We can formulate it also in another equivalent way. For a function $u_t^{com}(\ga)$
(index 'com' from 'common') suppose we can solve
the Cauchy problem for SDE \eqref{eqmainnonlinpartBeldens} with $u_t=u_t^{com}$ defining the correlations
$\eta_t(x,y)=\E \ga_{j,t}(x,y)$. Given these correlations we may be able to find an optimal feedback
control for the individual control problem \eqref{eqmainnonlinpartBeldens},
\eqref{eqcostfuninlim} under the fixed function $\eta_t$
 defining the individually optimal feedback
control $u_t^{ind}(w)$ (index 'ind' from 'individual') from
equation \eqref{eqHJBIsforspecialhom2}. The main MFG consistency equation is then expressed by
the equation  $u_t^{com}=u_t^{ind}$. If it is fulfilled, the pair $u_t^{com}$ and $\eta_t$
solves the limiting MFG in the sense defined above.

Formulated in this way, the MFG problem is fully classical, though the state space is the
sphere in the Hilbert space (or the space of density matrices). In Section \ref{seclimforback}
we shall write down explicitly the corresponding classical forward backward system for
the case of finite-dimensional quantum mechanics (that is, for finite set $X$).
A bit new moment (but not very essential one) is that the initial conditions
are identical to all players meaning that the initial measure in the forward equation
is a Dirac atom  and thus has no density,
unlike what is usually assumed in MFG analysis. Therefore, the theorem on the existence of the solutions
(given below in Section \ref{seclimforback} for finite $X$)
 can be considered as an existence result for classical MFGs on manifolds.
 What makes this story truly quantum is the completely different
link with $N$-agent quantum game arising essentially from Theorem \ref{thmynonlinSchco}.
Expressed otherwise, we established the correspondence that to each quantum $N$-agent game assigns a classical
$N$-agent game on some Riemannian manifold (possibly infinite-dimensional), so that the limiting
MFG forward-backward system is identical for both the quantum game and its classical counterpart.

\begin{theorem}
\label{mainresult}
Let the conditions on $H,L,A$ from Theorem \ref{thmynonlinSch} hold and let $\hat H$ be a
bounded self-adjoint operator  in $L^2(X)$.
Assume that the pair  $u^{MFG}_t(\ga)$ and $\eta^{MFG}_t(x,y)$
solves the limiting MFG problem and moreover $u^{MFG}_t$ is Lipschitz in the sense of
inequality \eqref{eq1thmynonlinSchco}.
Then the strategies
\[
u_j(t,\Ga_{N_t})=u^{MFG}_t(\Ga^{(j)}_{N,t}),
\]
where $\Ga^{(j)}_{N,t}$ is the partial trace of $\Ga_{N,t}$ with respect to all variables
except of the $j$th, form a symmetric $\ep$-Nash equilibrium for the $N$-agent quantum game
described by \eqref{eqmainNpartBeldenscoindiv} and \eqref{eqcostfunin},
with $\ep$ being of order $N^{-1/4}$.
\end{theorem}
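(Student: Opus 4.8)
The plan is to reduce the $\ep$-Nash property to two one-sided comparisons anchored at the optimal value $P^{MFG}$ of the limiting individual control problem \eqref{eqmainnonlinpartBeldens}, \eqref{eqcostfuninlim} with the mean field frozen at $\eta^{MFG}_t$ and control $u^{MFG}$. Write $P^{com}_N$ for the payoff of a tagged player (say the first) when all $N$ players use the common feedback $u^{MFG}$, and $\tilde P_N$ for its payoff when it unilaterally switches to an arbitrary admissible $\tilde u$ while the other $N-1$ players retain $u^{MFG}$. It suffices to establish $P^{com}_N\ge P^{MFG}-C N^{-1/4}$ and $\tilde P_N\le P^{MFG}+C N^{-1/4}$ with $C$ independent of $N$ and of $\tilde u$; subtracting gives $\tilde P_N\le P^{com}_N+2C N^{-1/4}$, which is exactly the $\ep$-Nash inequality with $\ep$ of order $N^{-1/4}$.

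First I would dispose of the symmetric estimate. With every player using $u^{MFG}$ the $N$-particle matrix obeys \eqref{eqmainNpartBeldensco}, so Theorem \ref{thmynonlinSchco} applies directly and, since $\al_N(0)=0$ for the product initial state, yields $\E\al_N(t)\le C_0 N^{-1/2}$. The Knowles--Pickl inequality \eqref{ineqKnPi} and concavity of the square root then give $\E\,{\tr}\,|\Ga^{(j)}_{N,t}-\ga_{j,t}|\le 2\sqrt{2}\,\sqrt{\E\al_N(t)}\le C_1 N^{-1/4}$. As the running and terminal costs in \eqref{eqcostfunineq} are traces against the bounded $J$ and $F$ and the control cost is common to both payoffs, this propagates to $|P^{com}_N-P^{MFG}|\le C_1(\|J\|T+\|F\|)N^{-1/4}$, where consistency clause (ii) identifies $\ga_{j,t}$ as the trajectory generating $\eta^{MFG}$ and hence its limiting payoff with $P^{MFG}$. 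This is exactly where the rate is born: the $N^{-1/2}$ bound on $\al_N$ is halved by the square root in \eqref{ineqKnPi}.

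The deviated estimate is the substantive step. When the first player uses $\tilde u$ and the rest keep $u^{MFG}$, its contribution to the empirical correlation $N^{-1}\sum_j\ga_{j,t}$ is of order $1/N$, so the mean field felt by every conforming player stays within $O(1/N)$ of $\eta^{MFG}$ in trace norm. I would therefore re-run the Pickl-type estimate underlying Theorem \ref{thmynonlinSchco} in this asymmetric configuration: the conforming block again closes onto the limiting evolution driven by $\eta^{MFG}$, while the tagged player's partial trace $\Ga^{(1)}_{N,t}$ converges, at the same $N^{-1/2}$ rate for its $\al_{N,1}$, to the solution $\ga_{1,t}$ of \eqref{eqmainnonlinpartBeldens} with control $\tilde u$ and the \emph{frozen} mean field $\eta^{MFG}$. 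Passing again through \eqref{ineqKnPi} and the boundedness of $J,F$ gives $\tilde P_N\le P^{\tilde u}_{MFG}+C_1 N^{-1/4}$, where $P^{\tilde u}_{MFG}$ is the value of the frozen-field individual problem under $\tilde u$. The optimality clause (i) states that $u^{MFG}$ maximizes this frozen-field payoff, so $P^{\tilde u}_{MFG}\le P^{MFG}$ uniformly in $\tilde u$, and the claimed one-sided bound follows.

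The main obstacle is precisely this asymmetric propagation of chaos, since Theorem \ref{thmynonlinSchco} was proved for a single common feedback. The Lipschitz hypothesis \eqref{eq1thmynonlinSchco} on $u^{MFG}$ is what lets the feedback nonlinearity of the conforming players be absorbed into a Gronwall argument, while the $1/N$ smallness of the lone deviator's back-reaction keeps its perturbation of the conforming dynamics---and thus of the closure constant---uniformly bounded. The final point of care is that the arbitrary $\tilde u$, which need not be Lipschitz, enters only the tagged player's own equation, so it affects neither the Hilbert--Schmidt interaction bound nor the conforming block's estimate; this is what allows $C$ to be chosen independently of the deviation.
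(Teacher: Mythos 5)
Your proposal is correct and follows essentially the same route as the paper's proof: an asymmetric re-run of the Pickl/Gronwall argument of Theorem \ref{thmynonlinSchco} (with the lone deviator's back-reaction absorbed as an $O(1/N)$ correction), conversion of the resulting $\E\,\al_N(t)=O(N^{-1/2})$ bound into an $O(N^{-1/4})$ trace-norm payoff comparison via the Knowles--Pickl inequality \eqref{ineqKnPi}, and a final appeal to the optimality of $u^{MFG}$ in the frozen-field limiting problem. Your explicit split into two one-sided inequalities anchored at $P^{MFG}$ is only a bookkeeping reorganization of the paper's argument, which bounds each player's $N$-agent payoff against its limiting payoff and then invokes optimality.
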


\begin{remark}
We assume the readers are familiar with the main concepts of game theory like Nash equilibrium, see any textbook
on game theory for instance \cite{KolMal}.
\end{remark}
A proof will be given in Section \ref{secproofexst}.

Let us briefly described an important extension concerning the information space of the players.
In the game above the players were allowed to have access to their individual partial traces $\Ga_{N,t}^{(j)}$.
In the spirit of classical MFGs one could imagine them to have access to 'empirical measures',
which in our case represent the average operators
 \begin{equation}
\label{eqdefquantempirmes}
\frac{1}{N} \sum_{j=1}^N \Ga_{N,t}^{(j)}
\end{equation}
considered as operators in $L^2(X)$.

\begin{remark}
Notice that this notion is very close, but different from the 'empirical measures' introduced in
\cite{GolsePaul}, where it is defined as the average in $L^2(X^n)$ of secondly quantized operators
evolved according to Heisenberg equations.
\end{remark}

Since the averages \eqref{eqdefquantempirmes} approach in expectation the expected correlations $\eta_t$,
allowing $u_j$   in \eqref{eqmainNpartBeldenscoindiv} to depend on this averages amounts
to the dependence on $\eta_t$ in the limit, which is already taken into account in the construction of the MFG
consistency problem. Consequently this additional information possibility will not change the result
of Theorem  \ref{mainresult}.

Let us also comment that the standard $\ep$-Nash equilibria concerns the result of one player deviating
from the common strategy. Here on can show quite similarly that even if a finite (but bounded) number 
of players deviate from the common MFG strategy, they cannot improve their payoff more than by an $\ep$.       

\section{Proof of Theorem \ref{thmynonlinSch}}
\label{secproofconv}

We shall use the notations from Section \ref{secmynonlinSchrod} without further reminder.
For definiteness, we shall give the argument for the case of the integral operator $A$ of interaction,
noting occasionally some specific features of another case.

{\it Step 1.}
In case $L^*=-L$ equation \eqref{eqmainNpartBeldens} for the density matrix of $N$
particle evolution takes the form
\[
d \Ga_{N,t}
=-i\sum_j [H_j,\Ga_{N,t}]-\frac{i}{N} \sum_{l<j\le N} [A_{lj},\ga_{N,t}]
\]
\begin{equation}
\label{eqmainNpartBeldensrep}
+\sum_j (\frac12 L_jL_j \Ga_{N,t} +\frac12 \Ga_{N,t} L_jL_j-L_j\Ga_{N,t} L_j )\, dt
+\sum_j [L_j, \Ga_{N,t}] dY_j,
\end{equation}
or with $V_{lj}$ instead of  $A_{lj}$  in case of an integral operator of interaction.

Our first objective is to calculate the differential $d\al_{N,j}$ for $\al_{N,j}$
defined by \eqref{eqforalpha}.

One of the key property of the conservative case is the preservation of the trace, so that
${\tr} \, \Ga_{N,t}=1$ and  ${\tr} \, \ga_{j,t}=\|\psi_{j,t}\|^2=1$ for all $t$ almost surely, because it
was assumed that  ${\tr} \, \Ga_{N,0}=1$ and ${\tr} \,\ga_{j,0}=\|\psi_{j,0}\|^2=1$.
Hence the operators $q_{j,t}=\1-\ga_{j,t}$ are orthogonal projectors in $L^2(X)$, which are also identified
with the orthogonal projectors in $L^2(X^N)$ by making them act on the $j$th variable.

Important role in calculations belongs to the averaging operator
\[
\hat m_N(t)=\frac{1}{N} \sum_{j=1}^N q_{j,t},
\]
in $L^2(X^N)$. In terms of these operators one can  rewrite \eqref{eqforalpha}
in the following equivalent form
\begin{equation}
\label{eqforalphacon}
\al_{N,j}(t)=1-{\tr}(\ga_{j,t} \Ga_{N,t})={\tr}(q_{j,t} \Ga_{N,t}),
\end{equation}
so that (by the i.i.d. property of $q_{j,t}$)
\begin{equation}
\label{eqforalphaconex}
\E \al_{N,j}(t)=\E \, {\tr}(q_{j,t} \Ga_{N,t})=\E \, {\tr}(m_N(t) \Ga_{N,t}).
\end{equation}

Using definition \eqref{eqforalpha}  and Ito's product rule we derive that
\begin{equation}
\label{eqforalphaconder}
d \al_{N,j}(t)=-{\tr} (d\Ga_{N,t} \ga_{j,t})-{\tr} (\Ga_{N,t} \, d \ga_{j,t})-{\tr} (d\Ga_{N,t} \, d \ga_{j,t}).
\end{equation}
The first nice observation is that the stochastic part (the terms with differentials $dY^j_t$)
vanishes in this expression. In fact, it equals
 \[
- {\tr} \left(\sum_{k=1}^N [L_k, \Ga_{N,t}] \ga_{j,t} \, dY^k_t+\Ga_{N,t} [L_j, \ga_{j,t}] dY^j_t \right).
\]
All terms with $k\neq j$ vanish, because of the commutativity of the trace and because $L_k$ and $\ga_{j,t}$ commute.
Thus this stochastic part reduces to
\[
- {\tr} (-\Ga_{N,t} L_j \ga_{j,t}+L_j \Ga_{N,t} \ga_{j,t}-\Ga_{N,t} \ga_{j,t} L_j+\Ga_{N,t} L_j \ga_{j,t}) dY^j_t=0.
\]
Therefore we can write further $\dot \al_{N,j}(t)$ instead of $d \al_{N,j}(t)$.
Next remarkable fact is that the operators $L_j$ cancel completely from the expression for $\dot \al_{N,j}(t)$.
In fact, as follows from \eqref{eqforalphaconder} and Ito's rule, their contribution to $\dot \al_{N,j}(t)$
equals
\[
-{\tr} \left(\sum_k (\frac12 L^2_k \Ga_{N,t} +\frac12 \Ga_{N,t} L_k^2-L_k \Ga_{N,t} L_k)\ga_{j,t}\right)
\]
\[
-{\tr}\left(\Ga_{N,t} (\frac12 L^2_j \ga_{j,t} +\frac12 \ga_{j,t} L_j^2-L_j \ga_{j,t} L_j)\right)
-{\tr} ([L_j, \Ga_{N,t}][L_j,\ga_{j,t}]).
\]
Again terms with $k\neq j$ vanish and thus this expression reduces to
\[
-{\tr} \left( (\frac12 L^2_j \Ga_{N,t} +\frac12 \Ga_{N,t} L_j^2-L_j \Ga_{N,t} L_j)\ga_{j,t}
+\Ga_{N,t} (\frac12 L^2_j \ga_{j,t} +\frac12 \ga_{j,t} L_j^2-L_j \ga_{j,t} L_j)
+[L_j, \Ga_{N,t}][L_j,\ga_{j,t}]\right)
\]
\[
=-{\tr}(\Ga_{N,t} ( L^2_j \ga_{j,t} +\ga_{j,t} L_j^2-2L_j \ga_{j,t} L_j+[L_j,\ga_{j,t}]L_j-L_j[L_j,\ga_{j,t}]))=0.
\]
Thus, denoting $A_j^{\bar \eta_t}$ the operator $A^{\bar \eta_t}$ acting on the $j$th variable, we obtain
\[
\dot \al_{N,j}(t)
=i \, {\tr} ([H_j+A_j^{\bar \eta_t},\ga_{j,t}]\ga_{N,t})
 +i \, {\tr} (\ga_{j,t} [H(N),\ga_{N,t}])
 \]
 \[
 =i \, {\tr} ([H_j+A_j^{\bar \eta_t},\ga_{j,t}]\Ga_{N,t})
 +i \, {\tr} ([\ga_{j,t},H(N)] \Ga_{N,t})
 \]
 \[
 =-i \, {\tr} ([H_j+A_j^{\bar \eta_t},q_{j,t}]\Ga_{N,t})
 +i \, {\tr} ([H(N),q_{j,t}] \Ga_{N,t})
 \]
 \[
 =i \, {\tr} ([H(N)-H_j-A_j^{\bar \eta_t},q_{j,t}] \Ga_{N,t})
\]
\begin{equation}
\label{eqderalin}
=i \, {\tr} ([\frac{1}{N}\sum_{m\neq j} A_{mj}-A_j^{\bar \eta_t},q_{j,t}] \ga_{N,t}).
\end{equation}

Since ${\tr}(ABC)=\overline{{\tr}(BCA)}$ for any self-adjoint operators $A,B,C$, it follows
\[
|\dot \al_{N,j}(t)|
\le 2|{\tr} \left((\frac{1}{N}\sum_{m\neq j} A_{mj}-A_j^{\bar \eta_t})q_{j,t} \ga_{N,t}\right)|
\]
\begin{equation}
\label{eqforalphaconder1}
\le \frac{2}{N} \, |{\tr} \left((\sum_{m\neq j} A_{mj}-(N-1)A_j^{\bar \eta_t}) q_{j,t} \ga_{N,t}\right)|
+\frac{2}{N} \, |{\tr} ( A_j^{\bar \eta_t} q_{j,t} \ga_{N,t})|.
\end{equation}

{\it Step 2.}
The main observation allowing to achieve some cancellation in the first term
of \eqref{eqforalphaconder1} is the following equation:
\begin{equation}
\label{eqmaincancel}
\ga_{m,t}A_{jm}\ga_{m,t} =\ga_{m,t}A_j^{\overline{\ga_{m,t}}}.
\end{equation}

This is proved by inspection. In fact, the operator $\ga_{m,t}A_{jm}\ga_{m,t}$ acts as
\[
\ga_{m,t}A_{jm}\ga_{m,t} f(x,y)
=\int_{X^4} \psi_{m,t}(y)\bar \psi_{m,t} (z) A(x,z;x',w)\psi_{m,t}(w)\bar \psi_{m,t} (y') f(x',y') dx'dy'dz dw
\]
(where $x,y$ denote the variables $(x_j, x_m)$ of the collection $(x_1, \cdots, x_N)$),
and hence has the kernel
\[
\int_{X^2} \psi_{m,t}(y)\bar \psi_{m,t} (z)  A(x,z;x',w)\psi_{m,t}(w)\bar \psi_{m,t} (y') dz dw.
\]
The operator $\ga_{m,t}A^{\eta}_j$ acts on $f(x,y)$ as
\[
(\ga_{m,t}A^{\eta}_j f)(x,y)= \int_{X^4} \psi_{m,t}(y) \bar \psi_{m,t}(y') A(x,z;x',w)\eta(z,w) \psi(x',y') dz dw dx' dy',
\]
and \eqref{eqmaincancel} follows.

In case of the multiplication operator of interaction, formulas \eqref{eqforalphaconder1}
and \eqref{eqmaincancel} remain valid with the operators
$V_{ml}$ and $V_j^{\xi_t}$ instead of the operators $A_{mj}$ and $A_j^{\bar \eta_t}$.
In fact for the case of multiplication operator this formula was introduced and exploited in
 \cite{KnowlesPickl}.

{\it Step 3.}

Let us introduce the random function
\[
\de_{N,t}(z,w) =  \frac{1}{N-1}\sum_{m\neq j} \bar \psi_{j,t}(z) \psi_{j,t}(w)-\overline{\eta(z,w)}
=\frac{1}{N-1}\sum_{m\neq j}  \overline{\ga_{j,t}(z,w)}-\overline{\eta(z,w)}.
\]
By the law of large numbers $\de$ tends to $0$, as $N\to \infty$ for any $j$.
More precisely, $\E \de_{N,t}=0$ and
  \begin{equation}
\label{estimatedelta}
\E |\de_{N,t}(z,w)|^2=\text{Var} \, (\de_{N,t}(z,w)) \le \frac{1}{N-1} \text{Var} (\ga_{j,t}(z,w)).
\end{equation}
We can write
 \[
 A_j^{\bar \eta_t}=\frac{1}{N-1}\sum_{m\neq j}A_j^{\overline{\ga_{m,t}}}-A_j^{\de_{N,t}}
 \]
 and therefore
 \begin{equation}
\label{eqforalphaconder2}
\dot \al_{N,j}(t)
=\frac{i}{N} \, {\tr} ([\sum_{m\neq j} (A_{mj}-A_j^{\overline{\ga_{m,t}}}),q_{j,t}] \ga_{N,t})
+\frac{i(N-1)}{N}\, {\tr} ([A^{\de_{N,t}},q_{j,t}] \ga_{N,t})
-\frac{i}{N} \, {\tr} ([A_j^{\bar \eta_t},q_{j,t}] \ga_{N,t}).
\end{equation}
Consequently,
 \begin{equation}
\label{eqforalphaconder3}
\E |\dot \al_N(t)| \le 2\E (I+II+III),
\end{equation}
with
\[
I= \frac{1}{N}  |{\tr} \left((\sum_{m\neq j} (A_{mj}-A_j^{\overline{\ga_{m,t}}})q_{j,t} \ga_{N,t}\right)|,
\]
\[
II=|{\tr} \left([A^{\de_{N,t}} q_{j,t} \ga_{N,t}\right)|, \quad
III=\frac{1}{N} |{\tr} ( A_j^{\bar \eta_t} q_{j,t} \ga_{N,t})|.
\]

In case of the multiplication operator of interaction we define
\[
\de_{N,t}(z) =  \frac{1}{N-1}\sum_{m\neq j} (|\psi_{j,t}(z)|^2-\xi_t(z)),
\]
so that $\E\de_{N,t}(z) =0$ and
\[
\text{Var} \, (\de_{N,t}(z))=\E(|\de_{N,t}(z)|^2)=\E (|\psi_{j,t}(z)|^4)/(N-1).
\]
{\it Step 4.}

Since $|{\tr}(BC)|\le {\tr}|B| \, \|C\|$ for any operators $B,C$, we have
\[
III \le \frac{1}{N} {\tr} |q_{j,t}\ga_{N,t}| \, \| A_j^{\bar \eta_t}\|
\le \frac{1}{N} \| A_j^{\bar \eta_t}\|.
\]

By the direct application of the Cauchy-Schwartz inequality, it follows that
\[
\| A_j^{\bar \eta_t}\| \le \|\eta\|_{L_2(X^2)} \|A\|_{HS}.
\]
But
\[
 \|\eta\|_{L_2(X^2)}={\tr} (\eta^2) \le 1
 \]
 (here $\eta^2$ means the square of $\eta$ as an operator in $L^2(X)$).
Consequently
\[
III \le \frac{1}{N}  \|A\|_{HS}.
\]

Similarly,
\[
II \le \|A\|_{HS} \|\de_{N,t}\|_{L^2(X^2)}
\]
Consequently, by \eqref{estimatedelta},
\[
\E \, II \le  \sqrt {\E (II^2)} \le \|A\|_{HS} (\E \|\de_{N,t}\|^2_{L^2(X^2)})^{1/2}
= \|A\|_{HS} \int_{X^2} (\E |\de_{N,t}(z,w)|^2 dzdw)^{1/2}
\]
\[
 \le \frac{1}{\sqrt{N-1}}\|A\|_{HS} \left(\int_{X^2}  \text{Var} (\ga_{j,t}(z,w)) dzdw\right)^{1/2}.
 \]
Since
\[
\int_{X^2}  \text{Var} (\ga_{j,t}(z,w)) dzdw \le \int_{X^2} \E |\ga_{j,t}(z,w)|^2 dzdw
\]
\[
=\E \int_{X^2} |\ga_{j,t}(z,w)|^2 dzdw =\E \, {\tr} \, \ga_{j,t}^2,
\]
where in the last term $\ga_{j,t}^2$ is the square of $\ga_{j,t}$ as an operator in $L^2(X)$,
it follows that
\[
\int_{X^2}  \text{Var} (\ga_{j,t}(z,w)) dzdw
\le \E ({\tr} (\ga_{j,t}) \|\ga_{j,t}\|) \le 1.
\]
Consequently,
\[
\E \, II \le \frac{1}{\sqrt{N-1}}\|A\|_{HS}.
 \]

In case of the multiplication operator of interaction we obtain
\[
III \le \frac{1}{N}  \|V\|_{\infty}, \quad II \le \|V\|_{\infty} \int_X |\de_{N,t}(z)| dz,
\]
and
\[
\E \, II \le \|V\|_{\infty} \int_X \E |\de_{N,t}(z)| dz
\le  \|V\|_{\infty} \int_X (\E |\de_{N,t}(z)|^2)^{1/2} \, dz
\]
 \begin{equation}
\label{eqforalphaconder4}
\le  \frac{1}{\sqrt{N-1}} \|V\|_{\infty} \int_X [\E( |\psi_{j,t}(z)|^4)]^{1/2} \, dz.
\end{equation}
Thus the estimate in this case becomes more involved.

{\it Step 5.}

Dealing with $I$ we plan to use the cancellation formula  \eqref{eqmaincancel}. To this end, we write
\[
I=\frac{1}{N}  |(\Psi_{N,t}, \sum_{m\neq j} (A_{mj}-A_j^{\overline{\ga_{m,t}}}) q_{j,t} \Psi_{N,t})|
\]
\[
\le \frac{1}{N}  \sum_{m\neq j}  |(\Psi_{N,t}, (q_{m,t}+\ga_{m,t})(A_{mj}-A_j^{\overline{\ga_{m,t}}}) (q_{m,t}+\ga_{m,t})q_{j,t} \Psi_{N,t})|.
\]
By \eqref{eqmaincancel}, the term containing two multipliers $\ga_{m,t}$ vanishes, so that
$I\le I_1+I_2$ with
 \[
I_1= \frac{1}{N}  \sum_{m\neq j}  |(\Psi_{N,t}, q_{m,t} (A_{mj}-A_j^{\overline{\ga_{m,t}}}) q_{j,t} \Psi_{N,t})|,
 \]
 \[
I_2 =\frac{1}{N}  \sum_{m\neq j}  |(\Psi_{N,t}, \ga_{m,t})(A_{mj}-A_j^{\overline{\ga_{m,t}}}) q_{m,t} q_{j,t} \Psi_{N,t})|.
 \]
 For the first term we get the estimate
 \[
I_1 \le \frac{1}{N} \sum_{m\neq j}
\| q_{m,t} \Psi_{N,t}\| \, \| q_{j,t} \Psi_{N,t}\| \, \|A_{mj}-A_j^{\overline{\ga_{m,t}}}\|
 \]
 \[
 \le \frac{2}{N} \sum_{m\neq j}  \| q_{m,t} \Psi_{N,t}\| \, \| q_{j,t} \Psi_{N,t}\| \|A\|_{HS}
 \le \frac{2}{N}  \|A\|_{HS} \sum_{m\neq j}  (\al_{N,m}(t)\al_{N,j}(t))^{1/2}.
 \]
 Consequently
 \[
  \E I_1 \le 2 \|A\|_{HS} \E \, \al_N(t).
  \]
With $I_2$ we repeat the transformation above writing
\[
I_2\le\frac{1}{N}  \sum_{m\neq j}
|(\Psi_{N,t}, q_{j,t}\ga_{m,t})(A_{mj}-A_j^{\overline{\ga_{m,t}}}) q_{m,t} q_{j,t} \Psi_{N,t})|
\]
\[
+\frac{1}{N}  \sum_{m\neq j}
|(\Psi_{N,t}, \ga_{j,t}\ga_{m,t})(A_{mj}-A_j^{\overline{\ga_{m,t}}}) q_{m,t} q_{j,t} \Psi_{N,t})|.
\]
The first term is estimated as $I_1$ above and in the second term the operator $A_j^{\overline{\ga_{m,t}}}$ cancels,
since it commutes with $q_{m,t}$. Thus we obtain that
\[
\E I_2 \le  2 \|A\|_{HS} \E \al_N(t)
+\frac{1}{N}  \E \sum_{m\neq j}  |(\Psi_{N,t}, \ga_{j,t}\ga_{m,t}A_{mj} q_{m,t} q_{j,t}\Psi_{N,t})|.
\]

The second term here is exactly the same as the one that appears in both \cite{Pickl} and \cite{KnowlesPickl},
where nice specific tricks were invented
(based on the inversion of operator $\hat m_N(t)$) to deal with it. Let us note reproduce
these arguments (see e.g. p. 116 of \cite{KnowlesPickl}), but just give the resulting estimate:

\[
\E I_2 \le  2 \|A\|_{HS} (\E \al_N(t)+\sqrt 2(\E \al_{N,t}+\frac{1}{N})).
\]

In case of the multiplication operator of interaction we obtain
all estimates in Step 5 remain valid with $\|V\|_{\infty}$ instead of $\|A\|_{HS}$.

{\it Step 6.} Putting all estimates above together we get (for $N>1$) that
 \begin{equation}
\label{eqforalphaconder5}
\E|\dot \al_N(t)|\le 7 \|A\|_{HS} \left(\E \, \al_N(t)+\frac{1}{\sqrt N}\right),
\end{equation}
and
\[
\E|\dot \al_N(t)|\le 7 \|V\|_{\infty} \left(\E \, \al_N(t)+\frac{1}{N}\right)
+\frac{2}{\sqrt N} \|V\|_{\infty}  \int_X \sqrt{\E(|\psi_{j,t}(z)|^4)} \, dz,
\]
for the case of the integral operator and the multiplication operator of interaction respectively.

Applying Gronwall's lemma yields \eqref{eq2thmynonlinSch} and \eqref{eq3thmynonlinSch}.

\section{Proof of Theorem \ref{thmynonlinSchco}}
\label{secproofconvco}

Following the lines of the proof of Theorem \ref{thmynonlinSch},
we arrive to the equation

\begin{equation}
\label{eqderalincon}
\dot \al_{N,j}(t)=i \, {\tr} ([\frac{1}{N}\sum_{m\neq j} A_{mj}-A_j^{\bar \eta_t}
+(u(t,\Ga^{(j)}_{N,t})-u(t,\ga_{j,t})) \hat H,q_{j,t}] \ga_{N,t}),
\end{equation}
generalizing equation \eqref{eqderalincon}.
Thus we need to get the estimate for the term
\[
 |{\tr} [((u(t,\Ga^{(j)}_{N,t})-u(t,\ga_{j,t})) \hat H,q_{j,t}] \ga_{N,t})|
 \le 2  |{\tr} (u(t,\Ga^{(j)}_{N,t})-u(t,\ga_{j,t}) \hat H q_{j,t} \ga_{N,t})|
 \]
 \[
 \le 2 | u(t,\Ga^{(j)}_{N,t})-u(t,\ga_{j,t})| \|\hat H\| \sqrt \al_{N,j}(t).
 \]
 By \eqref{eq1thmynonlinSchco},
 \[
 | u(t,\Ga^{(j)}_{N,t})-u(t,\ga_{j,t})|\le \ka \, {\tr} |\Ga^{(j)}_{N,t}-\ga_{j,t}|.
 \]
 By \eqref{ineqKnPi},
 \[
  {\tr} |\Ga^{(j)}_{N,t}-\ga_{j,t}| \le 2 \sqrt{2 \al_{N,j}(t)}.
  \]
  Therefore,
  \[
  |{\tr} (u(\Ga^{(j)}_{N,t})-u(\ga_{j,t}) \hat H,q_{j,t}] \ga_{N,t})|
  \le 4\sqrt 2 \ka \|\hat H\|    \al_{N,j}(t),
  \]
  and thus
  \[
  \E |{\tr} (u(\Ga^{(j)}_{N,t})-u(\ga_{j,t}) \hat H,q_{j,t}] \ga_{N,t})|
  \le  4\sqrt 2 \ka \|\hat H\|   \E \al_N(t).
  \]
  Adding this term to the r.h.s. of \eqref{eqforalphaconder5} and using again Gronwall's lemma
  yields \eqref{eq2thmynonlinSchco}.

\section{Proof of Theorem \ref{mainresult}}
\label{secproofexst}

 Assume that all players, except for one of them,
say the first one, are playing according to the MFG strategy $u^{MFG}(t,\Ga_{N,t}^{(j)})$, $j>1$, and
the first player is following some other strategy  $\tilde u(t,\Ga_{N,t}^{(1)})$.
By the law of large numbers (which is not affected by a single deviation),
all $\eta_t^j$ are equal and are given by the formula  $\eta_t=\E \ga_{j,t}$
for all $j>1$. Moreover, $\E \al_{N,j}(t)=\E \al_N(t)$ are the same for all $j>1$.

Following the proof of Theorem \ref{thmynonlinSchco} we obtain
\begin{equation}
\label{eq1mainresult}
\dot \al_{N,j}(t)=i \, {\tr} ([\frac{1}{N}\sum_{m\neq j} A_{mj}-A_j^{\bar \eta_t}
+(u^{MFG}(t,\Ga^{(j)}_{N,t})-u^{MFG}(t,\ga_{j,t})) \hat H,q_{j,t}] \ga_{N,t}),
\end{equation}
for all $j>1$. Up to an additive correction of magnitude not exceeding $4/N$
the r.h.s. can be substituted by the expression
\[
\dot \al_{N,j}(t)=i \, {\tr} ([\frac{1}{N}\sum_{m\neq j,1} A_{mj}-A_j^{\bar \eta_t}
+(u^{MFG}(t,\Ga^{(j)}_{N,t})-u^{MFG}(t,\ga_{j,t})) \hat H,q_{j,t}] \ga_{N,t}),
\]
which is then dealt with exactly as in the proof of Theorem \ref{thmynonlinSchco}
yielding the sam estimate  \eqref{eq2thmynonlinSchco} (with a corrected multiplier)
for $\E \al_N(t)=\E \al_{N,j}(t)$, $j>1$, that is
\begin{equation}
\label{eq2mainresult}
\E \al_N(t) \le \exp\{ 7(\|A\|_{HS}+\ka \|\hat H\|)t\} \al_{N}(0)
+(\exp\{7(\|A\|_{HS}+\ka \|\hat H\|)t\}-1)\frac{5}{\sqrt N}.
\end{equation}

Similarly the same estimate is obtained for $\E \al_{N,1}(t)$.
Since our initial condition are supposed to be product of identical functions,
the initial $\al_{N,j}(0)$ vanish yielding
\[
\E \al_{N,j}(t) \le C(T) N^{-1/2}
\]
for all $j$ and a constant $C(T)$ depending on $\|A\|_{HS},\ka, \|\hat H\|$.

  We can now compare the expected payoffs \eqref{eqcostfunineq} received by the
  players in the $N$-player quantum game
  with the expected payoff \eqref{eqcostfuninlim} received in the limiting game.
  For each $j$th player the difference is bounded by
  \[
\E \int_t^T  |{\tr} (J (\Ga^{(j)}_{N,s}-\ga_{j,s}))|\, ds
+\E |{\tr} (F(\Ga^{(j)}_{N,T}-\ga_{j,T}))|.
\]
Since,
\[
|{\tr} (J (\Ga^{(j)}_{N,s}-\ga_{j,s}))|\le \|J\| {\tr} |\Ga^{(j)}_{N,s}-\ga_{j,s}|,
\]
and by \eqref{ineqKnPi},
\[
 {\tr} |\Ga^{(j)}_{N,s}-\ga_{j,s}|\le 2\sqrt{2\al_{N,j}(s)},
 \]
 it follows that the expectation of the difference of the payoffs is bounded by
 \[
 2\sqrt 2 (\|J\|T +\|F\|) \sup_t \E \sqrt{\al_{N,j}(t)}
\]
\[
 \le 2\sqrt 2 (\|J\|T +\|F\|) \sup_t \sqrt{\E  \al_{N,j}(t)}
 \le (\|J\|T +\|F\|) C(T) N^{-1/4},
 \]
with a constant $C(T)$ depending on $\|A\|_{HS},\ka, \|\hat H\|$.

But by the assumption of the Theorem, $u_t^{MFG}$ is the optimal choice
for the limiting optimization problem. Hence the claim of the theorem follows.

 \section{Limiting MFG problem in finite-dimensional case}
\label{seclimforback}

Here we derive some existence results for the limiting MFG problem to finite-dimensional
 quantum systems, referred to as atoms. This section is essentially independent of other 
 material (though we use the notations introduced previously) and can be also considered 
 as an introduction to classical MFGs with a drift control on compact Riemannian manifolds, 
 as exemplified by the complex projective space.

The state space of each atom is a finite-dimensional Hilbert space $\C^{n+1}$.
The interaction will be given by the tensor $A(j,k;j',k')$, $j,k,j',k' \in \{0, \cdots, n\}$ such that
\[
A(j,k;j',k')=A(k,j;k',j'), \quad A(j,k;j',k')=\overline{A(j',k';j,k)}.
\]
The case of a multiplication operator of interaction is now fully included, as it corresponds
 to the diagonal tensor $A$.

 \begin{remark}
 The typical physics choice of the interaction between qubits, that is for $n=1$, is the operator
 describing the possible exchange of photons,
 $A=a_1^*a_2+a_2^*a_1$, with the annihilation operators $a_1$ and $a_2$ of the two atoms.
 This interaction is given by the tensor $A(j,k;m,n)$ such that $A(1,0;0,1)=A(0,1;1,0)=1$ with other
 elements vanishing.
 \end{remark}

As in Section \ref{secmainres}, each atom is controlled by an agent that can control
a part of the individual Hamiltonians. Therefore the $N$-particle Hamiltonian \eqref{eqHambinaryinter}
will have the form
\begin{equation}
\label{eqHambinaryintercontfd}
 H(N)f(i_1, \cdots , i_N)=\sum_{j=1}^N (H_j+u_j \hat H_j)f(i_1, \cdots , i_N)
 + \frac{1}{N}\sum_{l<j\le N} A_{lj}f(i_1, \cdots , i_N),
\end{equation}
where $H$ and $\hat H$ are self-adjoint matrices in $\C^{n+1}$ and $H_j$, $\hat H_j$ denote
their actions on the variables $i_j$, $u_j$ is a control parameter of $j$th agent defining
the strength of the field (say electric or magnetic) described by the Hamiltonian $\hat H_j$.
For simplicity we assume that each $u_j$ can be chosen from some fixed interval $U=[-U_0,U_0]$.

Assuming that observations of each atom are performed by the coupling with the same
anti-Hermitian vector-valued operator $\L=(L^1,\cdots, L^K)$ and assuming that the initial
conditions and controls of the agents
are identical we expect from Theorem \ref{thmynonlinSch} that the limiting evolution,
as $N\to \infty$, will be described by the nonlinear equation \eqref{eqmainnonlinpartBel1},
that is

\begin{equation}
\label{eqmainnonlinpartBel1corep}
d\psi_{j,k,t} =-[i(H+u_j(t)\hat H)\psi_{j,k,t} +iA^{\bar \eta_t} \psi_{j,k,t}
-\frac12  \L^2 \psi_{j,k,t} ]\,dt+\L\psi_{j,k,t} \, dY^j_t,
\end{equation}
where
\begin{equation}
\label{eqmainnonlinpartBel1co1}
\L^2 =\sum_{p=1}^P (L^p)^2, \quad
\L \, dY^j_t=\sum_{p=1}^P L^p dY^{j,p}_t,
\end{equation}
with $Y^{j,p}_t$, $j=1, \cdots N$, $p=1, \cdots, P$, being independent standard Brownian motions.

Since quantum states are defined up to a complex multiplier, so that the state space is effectively
the complex projective space $\C P^n$, rather than the linear space $\C^{n+1}$, it is convenient to rewrite equation
\eqref{eqmainnonlinpartBel1co} in projective coordinates $w=(w_1,\cdots, w_n)$, with $w_k=w_{k,t}=\psi_{k,t}/\psi_{0,t}$
(where we omit index $j$ for brevity).
To shorten formulas it is also handy to use the $n+1$-dimensional vector
$W=(1,w_1,\cdots, w_n)$ with the additional coordinate $w_0=1$.
This rewriting is done by direct application of Ito's formula
(details of simple calculations given in paper \cite{KolDynQuGames})
yielding the following equation
 \[
dw_k =i[w_k((H+u\hat H+A^{\bar\eta})W)_0-((H+u\hat H+A^{\bar\eta})W)_k]\, dt
\]
\[
+\frac12 \sum_p [((L^p)^2W)_k- w_k((L^p)^2W)_0 ] \, dt
\]
 \begin{equation}
\label{eqqufiBlinnind3w}
+\sum_p[w_k(L^pW)^2_0 -(L^pW)_0 (L^pW)_k]\, dt
+\sum_p[(L^pW)_k -w(k)(L^pW)_0]\, dY^{j,p}_t.
\end{equation}

Recall that the coordinates $w=(w_1, \cdots, w_n)$ cover the open dense subset $V_0$ of $\C P^n$ arising
from the vectors $\psi=(\psi_0, \cdots, \psi_n)\in \C^{n+1}$ with $\psi_0\neq 0$.
The whole  $\C P^n$ is covered by $n+1$ such charts $V_j$, each describing the vectors with $\psi_j\neq 0$.

As was discovered in \cite{KolDynQuGames}, if one chooses as the coupling operators $L^p$ the $(n^2+2n)$ generalized
Gell-Mann matrices (in case of a qubit these are $3$ Pauli matrices), the third and fourth terms in
\eqref{eqqufiBlinnind3w} will vanish and the second order diffusion operator arising from the last term in
\eqref{eqqufiBlinnind3w} will coincide (up to a multiplier $2$) with the major (second order) part
of the Laplace-Beltrami operator on the corresponding complex projective space $\C P^n$.
In our case $L^p$ are assumed to be anti-Hermitian, rather than Hermitian operators in \cite{KolDynQuGames}.
However, as seen directly, multiplying all operators by the imaginary unit $i$
(turning Hermitian matrices to anti-Hermitian) does not affect this property. Hence,
choosing $L^p$ as the generalized Gell-Mann matrices multiplied by $i$,
equation   \eqref{eqqufiBlinnind3w} simplifies to
 \[
dw_k =i[w_k((H+u\hat H+A^{\bar\eta})W)_0-((H+u\hat H+A^{\bar\eta})W)_k]\, dt
\]
 \begin{equation}
\label{eqqufiBlinnind3wGM}
+\sum_p[(L^pW)_k -w_k(L^pW)_0]\, dY^{j,p}_t, \quad k=1, \cdots, n,
\end{equation}
and the second order diffusion operator arising from the last term of this equation
equals $2\De_{pro}$, where $\De_{pro}$  is the major (second order) part
of the Laplace-Beltrami operator on the corresponding complex projective space $\C P^n$.
This operator is both invariant (it looks the same in the projective coordinates of all $n+1$ charts
$V_j$ covering $\C P^n$) and non-degenerate,
which makes it the most handy for the analysis of optimal control.

In particular, for the most important case of a qubit, that is for $n=1$,
the following formula holds in real coordinates $x,y$ (with $w=x+iy$):
\begin{equation}
\label{eqquaqdrfromPau}
2\De_{pro} S(x,y)= \frac12 (1+x^2+y^2)^2\left(\frac{\pa ^2S}{\pa x^2}+\frac{\pa ^2S}{\pa y^2}\right),
\end{equation}
that is, the l.h.s. coincides exactly (up to a multiplier $2$) with the Laplace-Beltrami operator
on the $2$-dimensional sphere $S^2$.
In case of a qutrit, that is for $n=2$, the formula for $\De_{pro}$ on $\C P^2$ is as follows:
  \[
 \De_{pro}S(w_1,w_2)=(1+\sum_j|w_j|^2)
 \bigl[( 1+|w_1|^2)\frac{\pa^2S}{\pa w_1 \pa \bar w_1}
 +( 1+|w_2|^2))\frac{\pa^2S}{\pa w_2 \pa \bar w_2}
 \]
  \begin{equation}
\label{eqLaplRiemProjcom}
+w_1\bar w_2 \frac{\pa^2S}{\pa w_1 \pa \bar w_2}+\bar w_1 w_2 \frac{\pa^2S}{\pa \bar w_1 \pa w_2}
\bigr].
 \end{equation}

\begin{remark}
More generally, the same effect occurs (the diffusion operator
of evolution \eqref{eqqufiBlinnind3w} coincides with the second order term of the Laplace-Beltrami operator),
if one takes as $L^p$ an orthonormal basis of the Lie algebra
of the group of unitary matrices $U(n)$.
\end{remark}

To derive the MFG equation let us assume that some deterministic 'empirical measure' $\eta_t$ is given
and $u_j$ has to be chosen by $j$th agent to maximize the payoff
 \begin{equation}
\label{eqcostfun}
P(t, W ; u(.)) =\E \int_t^T    (\langle J\rangle_{W(s)} -\frac{c}{2} u^2(s))\, ds +\langle F\rangle_{W(T)},
\end{equation}
where $J=(J_{pm})$ and $F=(F_{pm})$ are some $(n+1)\times (n+1)$ matrices
expressing the current and the terminal costs of the agent,
$c$ measures the cost of applying the control and $\E$ denotes the expectation
with respect to the random trajectories $W(s)$ arising from dynamic \eqref{eqqufiBlinnind3wGM}
under the strategic choice of the control $u$.
Recall that
\[
\langle J\rangle_{W}=\frac{(W,JW)}{(W,W)}=\frac{\sum_{p,m} \overline{W_p}J_{pm}W_m}{\sum_m |W_m|^2}
=\frac{\sum_{p,m} \overline{W_p}J_{pm}W_m}{1+\sum_{m>0} |w_m|^2},
\]
with similar formula for $\langle F\rangle_W$.

The problem of dynamic maximization of \eqref{eqcostfun} is a standard problem of controlled diffusion,
and we can write down the standard Hamilton-Jacobi-Bellman (HJB) equation describing the optimal payoff $S$,
taking into account that the (uncontrolled) diffusion part is given by $2\De_{pro}$:

\[
0=\frac{\pa S_t}{\pa t}+2\De_{pro}S_t+\langle J\rangle_W+\sup_u \left\{u \, \Pi(\nabla S_t)-\frac{c}{2}u^2\right\}
\]
\[
+\sum_k {Re}[iw_k(H+A^{\bar \eta_t} W)_0-i(H+A^{\bar \eta_t} W)_k]\frac{\pa S_t}{\pa x_k}
\]
\begin{equation}
\label{eqHJBIsforspecialhom}
+ \sum_k {Im} [iw_k(H+A^{\bar \eta_t} W)_0-i( H+A^{\bar \eta_t} W)_k]\frac{\pa S_t}{\pa y_k}
\end{equation}
with
\begin{equation}
\label{eqHJBIsforspecialhom1}
\Pi(\nabla S)=\sum_k \left[ {Re}[iw_k(\hat H W)_0-i(\hat H W)_k]\frac{\pa S}{\pa x_k}
+ {Im} [iw_k(\hat H W)_0-i(\hat H W)_k]\frac{\pa S}{\pa y_k}\right].
\end{equation}
Here $\sup_u=\sup_{u\in [-U_0,U_0]}$ can be calculated explicitly yielding
\begin{equation}
\label{eqHJBIsforspecialhom2}
u=u_t(w,\nabla S_t(w))=\min [\max(U_0, \Pi(\nabla S_t)/c), -U_0].
\end{equation}

The optimal cost function $S_t$ is expected to satisfy the backward Cauchy problem for this equation,
that is, it is specified by the terminal condition
\[
S_T(W)=\langle F\rangle_{W(T)}.
\]

Next, for a given feedback control $u=u_t(w)$, the process \eqref{eqqufiBlinnind3wGM}
is a nondegenerate diffusion  in $\C P^n$ and consequently its distribution has density
(for $t>0$) with
respect to Lebesgue measure. In the coordinates $w=\{w_j=x_j+iy_j\}$, $j=1,\cdots,n$,
 of the chart $V_0$ this density $\mu_t(w)$
satisfies the forward Kolmogorov equation
\[
\frac{\pa \mu_t}{\pa t}=2\De_{pro}\mu_t
-\sum_k \frac{\pa}{\pa x_k}[{Re}(iw_k(H+u(w)\hat H+A^{\bar \eta} W)_0-i(H+u(w)\hat H+A^{\bar \eta} W)_k)\mu_t]
\]
\begin{equation}
\label{eqKolmogorovfor}
-\sum_k \frac{\pa}{\pa y_k} [ {Im}(iw_k((H+u(w)\hat H+A^{\bar \eta} W)_0-i((H+u(w)\hat H+A^{\bar \eta} W)_k)\mu_t].
\end{equation}
Since our process starts with some fixed initial state $\psi_0=(\psi_{00}, \cdots, \psi_{n0})$,
with the corresponding initial vector $w(0)$ such that $w(0)_k=\psi_{j,k,0}/\psi_{j,0,0}$, $k=1, \cdots, n$,
 the density $\mu_t$ satisfies the Dirac initial condition $\mu_0(dw)=\de(w-w(0))$.

As the control $u_t(\mu)$ is usually not a smooth function,
 we cannot expect to have strong solutions of either the HJB equation
\eqref{eqHJBIsforspecialhom} or the Kolmogorov equation \eqref{eqKolmogorovfor}.
Hence it is convenient to rewrite them in the so called mild (or integral) forms,
that already include the corresponding initial and terminal conditions,
that is as the equations
\[
S_t(w)=e^{2(T-t)\De_{pro} }S_T+\int_t^T e^{2(s-t)\De_{pro}}
(\langle J\rangle_{W}+\sup_u \left\{u \, \Pi(\nabla S_s)-\frac{c}{2}u^2\right\} ds
\]
\[
+\int_t^T e^{2(s-t)\De_{pro}}\sum_k  {Re}[iw_k(H+A^{\bar \eta_s} W)_0-i(H+A^{\bar \eta_s} W)_k]\frac{\pa S_s}{\pa x_k} \, ds
\]
\begin{equation}
\label{eqHJBIsforspecialhommild}
+\int_t^T e^{2(s-t)\De_{pro}}\sum_k {Im} [iw_k(H+A^{\bar \eta_s} W)_0-i( H+A^{\bar \eta_s} W)_k]\frac{\pa S_s}{\pa y_k}\, ds,
\end{equation}

and
\[
\mu_t(w)=e^{2t\De_{pro}}\mu_0(w)
\]
\[
-\int_0^t e^{2(t-s)\De_{pro}}
\sum_k \frac{\pa}{\pa x_k}[{Re}(iw_k(H+u(w)\hat H+A^{\bar \eta_s} W)_0-i(H+u(w)\hat H+A^{\bar \eta_s} W)_k)\mu_s] \, ds
\]
\begin{equation}
\label{eqKolmogorovformild}
-\int_0^t e^{2(t-s)\De_{pro}}
\sum_k \frac{\pa}{\pa y_k} [ {Im}(iw_k((H+u(w)\hat H+A^{\bar \eta_s} W)_0-i((H+u(w)\hat H+A^{\bar \eta_s} W)_k)\mu_s]ds,
\end{equation}
respectively.

\begin{remark} An alternative approach could be to use the viscosity solutions
to equations \eqref{eqHJBIsforspecialhom} and \eqref{eqKolmogorovfor}.
\end{remark}

The forward-backward system of MFG that express the consistency of the individual optimal control
and the dynamics is therefore the pair of equations \eqref{eqHJBIsforspecialhom} and \eqref{eqKolmogorovfor},
or more generally \eqref{eqHJBIsforspecialhommild} and \eqref{eqKolmogorovformild},
coupled via control \eqref{eqHJBIsforspecialhom2}. Namely, this system consists of the backward HGB
equation \eqref{eqHJBIsforspecialhom} with
\begin{equation}
\label{eqcorrelationsinfor}
\eta_t(k,l)=\E \ga_t (k,l)=\E (\psi_{k,t} \overline{\psi_{l,t}}), \quad k,l=0, \cdots, n,
\end{equation}
arising from equation \eqref{eqKolmogorovfor}, and the forward equation \eqref{eqKolmogorovfor}
with $u_t(w)$ arising from \eqref{eqHJBIsforspecialhom} via formula \eqref{eqHJBIsforspecialhom2}.
Notice also that $\psi$ is obtained by normalization from the vector $W$, so that
\begin{equation}
\label{eqcorrelationsinfor1}
\psi_{k,t}=\frac{W_{k,t}}{\sqrt{1+|w_t|^2}}.
\end{equation}

We can formulate it also in another equivalent way, as the consistency condition
$u_t^{com}(w)=u_t^{ind}(w)$, as in Section \ref{secmainres}.

Let us finally represent the MFG problem as a single anticipating equation.
First of all let $K(t,w,v)$ be the heat kernel related to the operator $2\De_{pro}$ on $\C P^n$,
that is, $K(t,v,w)$ is the solution of the corresponding heat equation
$(\pa K/\pa t)=2\De_{pro} K$ as a function of $(t>0,v\in \C P^n)$ and has the Dirac initial condition $K(0,v,w)=\de_w(v)$.
It is well known that $K(t,v,w)$ is a (infinitely) smooth function of $v,w$ for $t>0$
and that the Cauchy problem for this heat equation is well posed in $M$. Its resolving operators
 \begin{equation}
\label{eqheatsem}
e^{2t \De_{pro}} f(v)=\int_{\C P^n} K(t,v,w) f(w) \, d_nw
= \int_{\C^n} K(t,v,w) f(w) \, d_nw,
\end{equation}
 form a strongly continuous semigroup of contractions
in the space $C(\C P^n)$ of bounded continuous functions on $\C P^n$, equipped with the sup-norm.
Here $d_nw=\sqrt{\det g(w)}dw$ is the Riemannian volume on $\C P^n$,
where $dw=\prod dw_j=\prod (dx_j dy_j)$ and $g(w)$ denotes the
Riemannian metric on $\C P^n$ in coordinate $w$).
Notice that the chart $V_0$ (of vectors $\psi$ with $\psi_0\neq 0$) is isomorphic to $\C^n$
and covers $\C P^n$ up to a set of zero measure, so that integrating over $\C^n$ and $\C P^n$ in
\eqref{eqheatsem} is equivalent. Having in mind that the spaces $L_1\C P^n)$ of integrable
(with respect to $d_v$) functions is inserted into the space $\MC(\C P^n)$ of Borel measures
on $\C P^n$, it follows that the semigroup $S_t$ extends to the semigroup $e^{2t \De_{pro}}$
on  $\MC(\C P^n)$ that maps $\MC(\C P^n)\to L_1\C P^n)$ for any $t>0$ and acts according to the formula
 \begin{equation}
\label{eqheatsemmes}
e^{2t \De_{pro}} \mu(v)=\int_{\C P^n} K(t,v,w) \mu(dw).
\end{equation}
Let us note for clarity that when we consider this transformation on measures,
we identify the function $e^{2t \De_{pro}} \mu(v)$ on the l.h.s.
with the measure $e^{2t \De_{pro}} \mu(v)d_nv$ on $\C P^n$.

\begin{remark} As mentioned above, $\De_{pro}$ coincides with the Laplace-Beltrami operator
in case $n=1$. In case $n>1$ $\De_{pro}$ is in fact only the major second order part
of the Laplace-Beltrami operator defining the standard Brownian motion on $\C P^n$.
However, since they differ only by the bounded smooth first order parts, all well known asymptotic and
smoothness properties of the Laplace-Beltrami operator remain valid for $\De_{pro}$.
\end{remark}

Hence equation \eqref{eqKolmogorovformild} rewrites in the following form
\[
\mu_t(v)=\int_{\C^n} K(t,v,w)\mu_0(dw)
-\int_0^t \int_{\C^n} K(t-s,v,w) \, ds d_nw
\]
\[
\times \bigl(
\sum_k \frac{\pa}{\pa x_k}[{Re}(iw_k(H+u_s(w)\hat H+A^{\bar \eta_s} W)_0
-i(H+u_s(w)\hat H+A^{\bar \eta_s} W)_k)\mu_s(w)]
\]
\begin{equation}
\label{eqKolmogorovformild1}
+\sum_k \frac{\pa}{\pa y_k} [ {Im}(iw_k((H+u_s(w)\hat H+A^{\bar \eta_s} W)_0
-i((H+u_s(w)\hat H+A^{\bar \eta_s} W)_k)\mu_s(w)]\bigr).
\end{equation}
Let us reiterate that with some abuse of notation we identify measures
with their densities (with respect to Riemannian volume) so that $\mu(dw)=\mu(w) d_nw$.
Thus in \eqref{eqKolmogorovformild1} only $\mu_0(dw)$ denotes the initial Dirac measure (that has no density),
and all other $\mu_t(w)$ denote the densities.

Using integration by parts this rewrites as
\[
\mu_t(v)=\int_{\C^n} K(t,v,w)\mu_0(dw)
\]
\[
+\int_0^t \int_{\C^n} \sum_k \frac{\pa}{\pa x_k} [K(t-s,v,w) \sqrt{\det g(w)}] \, dw ds
\]
\[
\times [{Re}(iw_k(H+u_s(w)\hat H+A^{\bar \eta_s} W)_0-i(H+u_s(w)\hat H+A^{\bar \eta_s} W)_k)\mu_s(w)]
\]
\[
+\int_0^t \int_{\C^n} \sum_k \frac{\pa}{\pa y_k} [K(t-s,v,w)  \sqrt{\det g(w)}] \, dw ds
\]
\begin{equation}
\label{eqKolmogorovformild2}
\times [ {Im}(iw_k(H+u_s(w)\hat H+A^{\bar \eta_s} W)_0-i(H+u_s(w)\hat H+A^{\bar \eta_s} W)_k)\mu_s(w)].
\end{equation}

The advantage of this equation as compared to \eqref{eqKolmogorovformild1} and \eqref{eqKolmogorovfor}
is clear: no smoothness of the function $u(w)$ is required for this equation to make sense.

Next, let $u_t(w,\nabla S_t(w;\eta_{\ge t})$ be given by \eqref{eqHJBIsforspecialhom2}
and be considered as a functional of $w$ and the curve $\eta_{\ge t}$, by which we denote
the piece of curve $\eta_s$ for $s\in [t,T]$. Of course $S_t$ denotes here the solution
of \eqref{eqHJBIsforspecialhommild}.
Plugging this into the forward equation \eqref{eqKolmogorovformild2}
we get the following single nonlinear equation with the anticipating (depending on the future) r.h.s.:
\[
\mu_t(v)=\int_{\C^n} K(t,v,w)\mu_0(dw)
\]
\[
+\int_0^t \int_{\C^n} \sum_k \frac{\pa}{\pa x_k} [K(t-s,v,w) \sqrt{\det g(w)}] \, dw ds
\]
\[
\times [{Re}(iw_k(H+u_s(w,\nabla S_s(w;\eta_{\ge s}))\hat H+A^{\bar \eta_s} W)_0
-i(H+u_s(w,\nabla S_s(w;\eta_{\ge s}))\hat H+A^{\bar \eta_s} W)_k)\mu_s(w)]
\]
\[
+\int_0^t \int_{\C^n} \sum_k \frac{\pa}{\pa y_k} [K(t-s,v,w)  \sqrt{\det g(w)}] \, dw ds
\]
\begin{equation}
\label{eqMFGqu}
\times [ {Im}(iw_k((H+u_s(w,\nabla S_t(w;\eta_{\ge s}))\hat H+A^{\bar \eta_s} W)_0
-i((H+u_s(w,\nabla S_s(w;\eta_{\ge s}))\hat H+A^{\bar \eta_s} W)_k)\mu_s(w)],
\end{equation}
where
\begin{equation}
\label{eqMFGqu1}
\eta_t(k,m)=\E_{\mu_t} (\psi_{m,t} \overline{\psi_{m,t}})
=\int_{\C^n} (\psi_{m,t} \overline{\psi_{m,t}}) \, \mu_t(w) \, d_nw, \quad t>0,
\end{equation}
and
\[
\eta_0(k,m)=\psi_{m,0} \overline{\psi_{m,0}}
=\int_{\C^n} (\psi_{m,0} \overline{\psi_{m,0}}) \, \mu_0(dw).
\]
By \eqref{eqcorrelationsinfor1}, we also have
\[
\eta_t^{km}= \E_{\mu_t}\frac{W_{m,t} \overline{W_{m,t}}}{1+|w_t|^2}, \quad k,m=0, \cdots, n,
\]
with $|w_t|^2=\sum_{k=1}^n |w_{k,t}|^2$.

Now we formulate the global existence and local well-posedness for equation \eqref{eqMFGqu},
which is equivalent to the coupled system of equations
 \eqref{eqHJBIsforspecialhommild} and \eqref{eqKolmogorovformild}.

 \begin{theorem}
 \label{thonwelposedMFGman}
 (i) For sufficiently small $T$ equation \eqref{eqMFGqu} is well posed,
 that is for any $R>0$ there exist $T>0$ such that for all $S_T$ with $\|S_T\|_{C^1(\C P^n)} \le R$,
 equation  \eqref{eqMFGqu} has a unique solution.
 (ii) For any $T>0$ and any continuously differentiable function $S_T$ on $\C P^n$
 there exists a solution to equation \eqref{eqMFGqu}.
 \end{theorem}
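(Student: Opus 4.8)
The plan is to reformulate the coupled forward--backward system \eqref{eqHJBIsforspecialhommild}--\eqref{eqKolmogorovformild} as a fixed-point problem for the curve $\eta_{\ge 0}=(\eta_t)_{t\in[0,T]}$ of correlation matrices, exploiting that $\eta_t$ ranges over the compact convex set of density matrices on $\C^{n+1}$, a bounded subset of a finite-dimensional space of matrix-valued paths. I would define a solution map $\Phi\colon \eta\mapsto \eta'$ as the composition of three operators: (a) given $\eta$, solve the backward mild HJB equation \eqref{eqHJBIsforspecialhommild} for $S_t[\eta]\in C^1(\C P^n)$; (b) read off the feedback $u_t(w)=u_t(w,\nabla S_t[\eta])$ through the explicit clamped maximizer \eqref{eqHJBIsforspecialhom2}; (c) solve the forward mild equation \eqref{eqKolmogorovformild2} (which is \emph{linear} in $\mu$ once $u$ and $\eta$ are frozen) for the density $\mu_t[\eta]$, and set $\eta'_t$ from \eqref{eqMFGqu1}. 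A fixed point of $\Phi$ is precisely a solution of the MFG system, and hence of \eqref{eqMFGqu}.

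The two analytic inputs are the smoothing and mass properties of the semigroup $e^{2t\De_{pro}}$ on the compact manifold $\C P^n$, and the global Lipschitz character of the Hamiltonian. The latter is the crucial structural point: because the control is constrained to the compact interval $U=[-U_0,U_0]$, the partial Legendre transform $\sup_{u\in U}\{u\,\Pi(\nabla S)-\tfrac{c}{2}u^2\}$ is everywhere $C^1$ in $\Pi$ with maximizer bounded by $U_0$, and since $\Pi$ depends on $\nabla S$ linearly (with coefficients that are smooth bounded vector fields on $\C P^n$, cf. \eqref{eqHJBIsforspecialhom1}), the whole HJB nonlinearity is globally Lipschitz in $(S,\nabla S)$. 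This removes the usual quadratic-gradient obstruction and turns \eqref{eqHJBIsforspecialhommild} into a semilinear parabolic equation with a globally Lipschitz nonlinearity; combined with the gradient estimate $\|e^{2t\De_{pro}}f\|_{C^1}\le C(1+t^{-1/2})\|f\|_{C}$ and the integrable singularity $\int_0^t(t-s)^{-1/2}\,ds=2\sqrt t$, a Picard iteration in $C([0,T],C^1(\C P^n))$ produces a solution $S_t[\eta]$ for arbitrary $T$, with $C^1$-norm controlled (by a fractional-integral Gronwall, i.e. Mittag--Leffler, bound) in terms of $\|S_T\|_{C^1}$ and $\|J\|,\|A\|_{HS},\|\hat H\|$. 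For the forward step I would use the integration-by-parts form \eqref{eqKolmogorovformild2}, in which all spatial derivatives fall on the smooth kernel $K$: this is essential since $u_t(w)$ is only Lipschitz, not smooth. As $e^{2t\De_{pro}}$ is mass- and positivity-preserving, $\mu_t[\eta]$ stays a probability density, so $\eta'_t$ remains a density matrix for all $t$.

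For part (i) I would track the Lipschitz dependence of each operator on the input curve: $\eta\mapsto S[\eta]$ is Lipschitz because $A^{\bar\eta}$ enters the HJB coefficients linearly; $\nabla S\mapsto u$ is $1/c$-Lipschitz by \eqref{eqHJBIsforspecialhom2}; $(u,\eta)\mapsto\mu$ is Lipschitz through the linear forward equation \eqref{eqKolmogorovformild2}; and $\mu\mapsto\eta'$ is bounded-linear by \eqref{eqMFGqu1}, noting that $\psi_m\bar\psi_k=W_m\overline{W_k}/(1+|w|^2)$ is bounded on $\C P^n$. Each mild time-integral contributes a factor $O(\sqrt T)$ or $O(T)$, so the composite Lipschitz constant of $\Phi$ on $C([0,T],\{\text{density matrices}\})$ is $O(\sqrt T)$; choosing $T$ small, depending on $R$ through the a priori $C^1$-bound on $S$, makes $\Phi$ a contraction, and Banach's theorem yields the unique local solution, establishing well-posedness.

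For part (ii) I would drop uniqueness and argue by Schauder. On the closed convex set of continuous curves $\eta\colon[0,T]\to\{\text{density matrices}\}$ the map $\Phi$ is well defined for every $T$ by the global solvability above, maps the set into itself, and is continuous by continuous dependence of the backward and forward mild solutions on their coefficients. For compactness of the image I would establish equicontinuity in $t$ of $\eta'_t$: the mild representations \eqref{eqHJBIsforspecialhommild} and \eqref{eqKolmogorovformild2}, the strong continuity of $e^{2t\De_{pro}}$, and the uniform bounds on $u$, $\mu$ and the drift yield a uniform (H\"older-$1/2$) modulus of continuity, with the single Dirac initial datum $\mu_0$ accommodated by the instantaneous smoothing for $t>0$; Arzel\`a--Ascoli then gives relative compactness and Schauder's theorem a solution for arbitrary $T$. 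The hard part, underlying both statements, will be the global $C^1$ a priori control of $S_t$: one must close the mild identity in which $\nabla S$ appears under the time integral against the singular kernel $\nabla e^{2(s-t)\De_{pro}}$. The globally Lipschitz Hamiltonian reduces this to a fractional-integral Gronwall inequality rather than a genuinely nonlinear one, so it closes --- but verifying that the drift coefficients $iw_k(\cdots)_0-i(\cdots)_k$, unbounded in the chart coordinate $w$ yet smooth bounded vector fields on the compact $\C P^n$, contribute controllably to the $C^1$ estimate is the delicate coordinate-invariance point requiring care.
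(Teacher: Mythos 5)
Your proposal is correct and rests on the same analytic pillars as the paper's proof: the heat-kernel smoothing estimate $\|e^{2t\De_{pro}}f\|_{C^1}\le Ct^{-1/2}\|f\|_{C}$, the global Lipschitz character of the Hamiltonian coming from the compact control interval, the Lipschitz dependence of the mild HJB solution on the curve $\eta$, the integration-by-parts form \eqref{eqKolmogorovformild2} of the forward equation (so that no smoothness of $u$ is needed), and the same two-tier conclusion --- Banach contraction with the $O(\sqrt T)$ factor for part (i), Schauder plus Arzel\`a--Ascoli via uniform H\"older-$1/2$ equicontinuity for part (ii). The genuine difference is the choice of fixed-point variable. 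The paper iterates the map $\Phi$ directly on measure-valued curves $\mu_.$ in $C_0([0,T],\MC(\C P^n))$, which forces it to introduce a somewhat nonstandard hybrid space (curves that are only weakly continuous in $t$, because of the Dirac initial condition, yet metrized by the strong norm for part (i), and re-metrized by the $(C^1(\C P^n))^*$-weak distance for part (ii)); in exchange, each evaluation of $\Phi$ is a single application of the integral operator on the r.h.s.\ of \eqref{eqMFGqu}, with no auxiliary equation to solve. You instead iterate on the correlation curve $\eta_.$, which lives in the set of continuous paths with values in the compact convex finite-dimensional set of density matrices; this neatly sidesteps the hybrid-topology issue (weak and norm topologies coincide there, and $\eta_0$ is just a matrix, so the Dirac datum causes no loss of continuity at $t=0$), at the modest extra cost that your step (c) requires solving the frozen-coefficient linear forward equation within each iteration --- a routine Picard argument with the same $(t-s)^{-1/2}$ singularity. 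Both routes need the positivity and mass preservation you point out (so that $\eta'_t$ stays a density matrix and the iteration is a self-map), and both delegate the global $C^1$ bound on $S_t$ to the same mechanism that the paper imports from \cite{KolDynQuGames} and \cite{Kolbook19}. Your closing caveat about the drift coefficients being quadratically unbounded in the chart coordinate $w$ while being smooth vector fields on the compact $\C P^n$ is well taken: the paper handles this implicitly by working invariantly on the manifold, and making that invariance explicit is indeed where care is needed in either formulation.
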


\begin{proof}
As was mentioned this result can be considered as belonging to the theory of classical MFGs on manifolds.
Though being seemingly new (the author is unaware of any papers dealing with MFGs on manifolds), the
proof can be performed by the (by now) standard approach, of course enhanced by some
specific geometric analysis.  We will follow closely the method from
\cite{KoWe13}.

(i) We reformulate equation \eqref{eqMFGqu} as a fixed point problem in the following way.
Let $C_0([0,T], \MC(\C P^n))$ be the space of weakly continuous functions from $[0,T]$ to
$\MC(\C P^n)$, equipped with distance
\[
\|\mu^1_.-\mu^2_.\|_{\MC,T}=\sup_{t\in [0,T]}\sup_{\|f\|\le 1} |(f, \mu^1_t-\mu^2_t)|
\]
such that the initial point is fixed as the Dirac measure $\mu_0(dw)=\de(w-w(0))$.

\begin{remark} It is straightforward to see that this space is a complete metric space.
It is a bit nonstandard, because continuity in $t$ is defined as the weak one, while the
distance is defined via the strong Banach topology. The necessity to use such a hybrid
arises because curves starting with a Dirac measure and having densities
otherwise (that eventually would solve our problem) cannot
be strongly continuous.
\end{remark}

Equation \eqref{eqMFGqu} is the fixed point problem for the mapping $\Phi(\mu_.)$ in
$C_0([0,T], \MC(\C P^n))$ expressed by the r.h.s. of \eqref{eqMFGqu}.

To any curve $\mu_t$ in $C_0([0,T], \MC(\C P^n))$ there corresponds the curve
$\eta_t$ given by \eqref{eqMFGqu1}. It follows that for two curves $\mu_t^1, \mu_t^2$
we have for the corresponding matrix-valued curves $\eta_t^j$ the estimate
\begin{equation}
\label{eq1thonwelposedMFGman}
 {\tr}|\eta_t^1-\eta_t^2|\le  \|\mu^1_.-\mu^2_.\|_{\MC,T}.
\end{equation}

Next we solve the backward HJB equation \eqref{eqHJBIsforspecialhommild}
finding the function $S(t,w)$ that depends on $\mu_.$. Actually $S(t,w)$ depends on
the future $\mu_{\ge t}$ only, but this is not very essential for the argument.

The well-posedness of this HJB equation was proved in \cite{KolDynQuGames}. Namely,
it was shown that for any function $S_T\in C^1(\C P^n)$ (the space of continuously differentiable
functions on $\C P^n)$), there exists a unique curve $S_t \in C([0,T],C^1(\C P^n))$
(the space of continuous curves with values in $C^1(\C P^n)$ that solves \eqref{eqHJBIsforspecialhommild}.

In fact, this well-posedness is a consequence of a general well-posedness result from
\cite{Kolbook19} and the following statement expressing the key smoothing property of the
semigroup $e^{2t \De_{pro}}$:
\begin{equation}
\label{eq1propheatsemomansmoo}
\|e^{2t \De_{pro}}f\|_{C^1(M)}\le C t^{-1/2} \|f\|_{C(M)},
 \end{equation}
 \begin{equation}
\label{eq2propheatsemomansmoo}
\|e^{2t \De_{pro}}f\|_{C^1(M)}\le C \|f\|_{C^1(M)}
 \end{equation}
with a constant $C$, uniformly for any compact interval of time.
Moreover, by the same arguments as in the proof of Theorem 6.1.2 of \cite{Kolbook19}
(devoted to the case of HJB in $\R^d$) it follows that the solution depends Lipschitz
continuously on a parameter, if the corresponding Hamiltonian function depends
Lipschitz continuously on this parameter. In our case the role of the parameter is played
by the matrix-valued curves $\eta_t$, and it follows that for the solutions $S_t^1$ and $S_t^2$
corresponding to the curves $\eta^1_t$ and $\eta_t^2$, the following estimate holds.
\begin{equation}
\label{eq2thonwelposedMFGman}
\max_{t\in [0,T]}\|S_t^1-S_t^2\|_{C^1(\C P^n)}
\le K \max_{t\in [0,T]} {\tr}|\eta_t^1-\eta_t^2|,
\end{equation}
with a constant $K$ depending continuously on the tensor $A$, the time interval $T$ and the norm $\|S_T\|_{C^1(\C P^n)}$.

 Next, by \eqref{eq1thonwelposedMFGman} and \eqref{eq2thonwelposedMFGman}, the square bracket on the r.h.s.
 of \eqref{eqMFGqu} depend Lipschitz continuously on $\mu_.$, and by \eqref{eq1propheatsemomansmoo},
 the derivatives $(\pa/\pa x_k)[\cdots]$ and $(\pa/\pa y_k)[\cdots]$ are of order $(t-s)^{-1/2}$.
 Hence, for two curves $\mu^1_.$ and $\mu^2_.$ we get
 \[
 \|[\Phi(\mu^1_.)](t)-[\Phi(\mu^2_.)](t)\|_{\MC,T}
 \le \sqrt T K_T\|\mu^1_.-\mu^2_.\|_{\MC,T}
 \]
 with $K_T$ depending continuously  on $H$, $A$,$\hat H$, $\ka$  and the norm $\|S_T\|_{C^1(\C P^n)}$.
 For sufficiently small $T$, we get $\sqrt T K_T<1$, and for this $T$ by the Banach contraction principle
 we derive the existence and uniqueness of the fixed point thus proving part (i) of the theorem.

 (ii) Let us now consider $C_0([0,T], \MC(\C P^n))$ with a different topology. Namely,
 the distance in $\MC(\C P^n)$ will be defined from the space $(C^1(\C P^n))^*$, dual
 to the space of smooth functions on $\MC(\C P^n)$. This distance defines the weak topology,
 and therefore $C_0([0,T], \MC(\C P^n))$ becomes a closed convex subset of the set of continuous
 functions from $[0,T]$ to the compact space of probability measures on the compact space $\C P^n$.
 The mapping  $\Phi(\mu_.)$ from part (i) clearly maps $C_0([0,T], \MC(\C P^n))$ into itself.
 To deduce the existence of a fixed point form the Schauder fixed point principle we have
 to show that the image of $\Phi$ is compact in $C_0([0,T], \MC(\C P^n))$. But as seen directly
 from the definition of $\Phi$ this image consists of uniformly $1/2$-H\"older curves, and such curves form
 a compact set due to the Arzela theorem.

\end{proof}

\section{Conclusion}
\label{secconc}

We have developed a new framework for studying the control problems of continuously observed
system of large number of interacting quantum particles, by developing quantum analog
for the theory of mean-field games.
By-passing we proved a rigorous convergence result deriving a limiting nonlinear
stochastic equation for individual particles from a large stochastic system of interacting particles.

Let us point out to some questions and open problems
 arising from this developments.

 The key simplification of our analysis is the assumption of conservativity of observation
 (coupling operators to measuring devices are anti-Hermitian). This an exceptional case, and it
 is of interest to get rid of this restrictive assumption.

 We developed the theory on the assumption of the existence of solutions to the limiting MFG problems.
 The existence was proved only for finite-dimensional state spaces of single particle
 (and only for special homodyne arrangements). The method was based
 on forward-backward system on manifolds. For the standard infinite-dimensional cases such systems
 would become systems of equations in variational derivatives. Therefore possible other methods
 (like stochastic Pontryagin maximum principle) can be used here to establish existence of solutions
 to the limiting MFG problem. Another restriction was the assumption of identical initial states for
  all agents. It would be natural to extend the theory beyond this restriction.

We developed the theory for diffusive type of observation, filtering and control. 
There should be a counterpart of this theory for the observations of counting type.   

Finally, the development of numeric schemes for solving forward-backward MFG systems on manifolds
would be of interest. They can be based, for instance, on some extensions of the technique from \cite{McEa09}.

\end{document}